%        ***  USE THE JOURNAL-SPECIFIC *.TEMPLATE FILE.  ***
%
% Replace amsart by the documentclass for the target journal, e.g., tran-l.
%
\documentclass[10pt]{amsart}
\usepackage{amsmath}
\usepackage{amssymb}
\usepackage{xypic}
\usepackage{yhmath}
 \textheight=615pt \textwidth=360pt
% \textwidth=138truemm \textheight=208truemm
\usepackage{geometry}
\geometry{left=4cm, right=4cm, top=3.5cm, bottom=3.6cm}
\def\beqnn{\begin{eqnarray*}}\def\eeqnn{\end{eqnarray*}}

\newtheorem{theorem}{Theorem}[section]
\newtheorem{proposition}[theorem]{Proposition}
\newtheorem{corollary}[theorem]{Corollary}
\newtheorem{lemma}[theorem]{Lemma}
\newtheorem{problem}[theorem]{Problem}

\theoremstyle{definition}

\newtheorem{remark}[theorem]{Remark}

\newtheorem{claim}[theorem]{Claim}

\theoremstyle{question}

\numberwithin{equation}{section}

%    Absolute value notation

%    Blank box placeholder for figures (to avoid requiring any
%    particular graphics capabilities for printing this document).

\begin{document}

\title[Generalized Hilbert matrix operators]{Generalized Hilbert matrix operators acting on\\ weighted sequence spaces}

%\author{Huabing Li}
%    Address of record for the research reported here
%\address{School of Mathematics Sciences, Hefei University of Technology, Xuancheng Campus, Xuancheng 242000, P.R.China}
%    Current address
%\email{musicli121@163.com}

\author{Jianjun Jin}
%    Address of record for the research reported here
\address{School of Mathematics, Hefei University of Technology, Xuancheng Campus, Xuancheng 242000, P.R.China}
%    Current address
\email{ jin@hfut.edu.cn, jinjjhb@163.com}

%\author{Shuan Tang}
%\address{School of Mathematics Sciences, Guizhou Normal University, Guiyang 550001,
%P.R.China} \email{tsa@gznu.edu.cn}

%    \thanks will become a 1st page footnote.
\thanks{The author was supported by National Natural Science Foundation of China(Grant Nos. 11501157). }
%    Information for second author
%\author{Author Two}
%\address{Mathematical Research Section, School of Mathematical Sciences,
%Australian National University, Canberra ACT 2601, Australia}
%\email{two@maths.univ.edu.au}
%\thanks{Support information for the second author.}

%\thanks{ {\bf{Data Availability Statement:} the datasets generated during and/or analysed during the current study are available from the corresponding author on reasonable request.}}

%    General info
\subjclass[2010]{47B37; 47B01; 47A30}

%\date{February 9, 2013.}

%\dedicatory{This paper is dedicated to our advisors.}

\keywords{Generalized Hilbert matrix operator; boundedness of operator; norm of operator.}

\begin{abstract}
In this paper we introduce and study a new kind of generalized Hilbert matrix operators, induced by a positive finite Borel measure on $(0,1)$, acting on weighted sequence spaces.  We establish a sufficient and necessary condition for the boundedness of these operators. These results extend some related ones obtained recently in [Bull. Lond. Math. Soc., 55 (2023), no. 6, 2598–2610].
\end{abstract}

\maketitle

\section{{\bf Introductions and main results}}

For $1<p<\infty$, we denote the H\"older conjugate of $p$ by $q$, i.e., $\frac{1}{p}+\frac{1}{q}=1$.  Let $l^p$ be the space of sequences of complex numbers, i.e.,
\begin{equation*}l^{p}:=\{a=\{a_n\}_{n=0}^{\infty}: \|a\|_{p}=(\sum_{n=0}^{\infty} |a_{n}|^p)^{\frac{1}{p}}<\infty \}.\end{equation*}
For $a=\{a_n\}_{n=0}^{\infty}\in l^{p}$, $b=\{b_n\}_{n=0}^{\infty}\in l^{q}$,  the famous Hilbert's inequality is stated as 
$$\Big|\sum_{m=0}^{\infty}\sum_{n=0}^{\infty}\frac{a_m b_n}{m+n+1}\Big|\leq \pi\csc(\frac{\pi}{p})\|a\|_p\|b\|_q,$$
where the constant $\pi\csc(\frac{\pi}{p})$ is the best possible. Hilbert's inequality has an equivalent form as follows.
\begin{equation}\label{h} \sum_{n=0}^{\infty}\Big|\sum_{m=0}^{\infty}\frac{a_m}{m+n+1}\Big|^p  \leq \Big[\pi\csc(\frac{\pi}{p})\Big]^p\|a\|_p^p,\end{equation}
where the constant $\Big[\pi\csc(\frac{\pi}{p})\Big]^p$ is the best possible, see \cite[Theorem 6]{HLP-0} or \cite[Theorem 323]{HLP}.

The inequality (\ref{h}) can be restated in the language of operator theory. For the Hilbert kernel $\frac{1}{m+n+1}$, we define the Hilbert matrix operator $\mathbf{H}$ as
$${\bf{H}}(a)(n):=\sum_{m=0}^{\infty}\frac{a_m}{m+n+1}, \,a=\{a_m\}_{m=0}^{\infty}, \,n\in \mathbb{N}_0=\mathbb{N} \cup \{0\}.$$
Then  (\ref{h}) can be restated as
\begin{theorem}\label{m-0}
Let $1<p<\infty$. Then $\mathbf{H}$ is bounded on $l^p$ and the norm of $\mathbf{H}$ is $\pi\csc(\frac{\pi}{p})$. \end{theorem}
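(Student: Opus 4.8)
The plan is to obtain Theorem~\ref{m-0} directly from inequality (\ref{h}) together with the known optimality of its constant; there is essentially no new content, so the proof is just a translation into operator language. First one must verify that $\mathbf{H}(a)$ is well defined for every $a\in l^p$. For a fixed $n\in\mathbb{N}_0$ the sequence $\{(m+n+1)^{-1}\}_{m=0}^{\infty}$ belongs to $l^q$ (here $q>1$), so H\"older's inequality gives
\begin{equation*}
\sum_{m=0}^{\infty}\frac{|a_m|}{m+n+1}\le\|a\|_p\Big(\sum_{m=0}^{\infty}\frac{1}{(m+n+1)^q}\Big)^{1/q}<\infty ,
\end{equation*}
so $\mathbf{H}(a)(n)$ makes sense. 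Then, by the definitions of $\mathbf{H}$ and of $\|\cdot\|_p$ and by (\ref{h}),
\begin{equation*}
\|\mathbf{H}(a)\|_p^p=\sum_{n=0}^{\infty}\Big|\sum_{m=0}^{\infty}\frac{a_m}{m+n+1}\Big|^p\le\Big[\pi\csc\big(\tfrac{\pi}{p}\big)\Big]^p\|a\|_p^p ,
\end{equation*}
so $\mathbf{H}$ is bounded on $l^p$ with $\|\mathbf{H}\|\le\pi\csc(\frac{\pi}{p})$.

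For the reverse inequality, that is, the sharpness of the norm, write $C:=\|\mathbf{H}\|$. Then $\sum_{n=0}^{\infty}|\mathbf{H}(a)(n)|^p\le C^p\|a\|_p^p$ for all $a\in l^p$; this is (\ref{h}) with the constant $[\pi\csc(\frac{\pi}{p})]^p$ replaced by $C^p$. Since $[\pi\csc(\frac{\pi}{p})]^p$ is the best possible constant in (\ref{h}) (\cite[Theorem~6]{HLP-0} or \cite[Theorem~323]{HLP}), necessarily $C^p\ge[\pi\csc(\frac{\pi}{p})]^p$, hence $C\ge\pi\csc(\frac{\pi}{p})$. Combined with the previous paragraph, this yields $\|\mathbf{H}\|=\pi\csc(\frac{\pi}{p})$.

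If one prefers a self-contained proof of the sharpness rather than quoting the optimality in (\ref{h}), one tests $\mathbf{H}$ on the sequences $a^{(\varepsilon)}=\{(m+1)^{-1/p-\varepsilon}\}_{m=0}^{\infty}$ with $0<\varepsilon<\frac{1}{q}$ and lets $\varepsilon\to0^+$: here $\|a^{(\varepsilon)}\|_p^p=\sum_{m\ge0}(m+1)^{-1-p\varepsilon}=\frac{1}{p\varepsilon}+O(1)$, while comparing the decreasing summand $(m+1)^{-1/p-\varepsilon}/(m+n+1)$ with the integral $\int_0^{\infty}(x+1)^{-1/p-\varepsilon}(x+n+1)^{-1}\,dx$ and rescaling the variable by $n+1$ gives $\mathbf{H}(a^{(\varepsilon)})(n)\ge(n+1)^{-1/p-\varepsilon}\big(\pi\csc(\pi(\tfrac{1}{q}-\varepsilon))-o(1)\big)$ as $n\to\infty$, using $\int_0^{\infty}\frac{t^{-1/p-\varepsilon}}{1+t}\,dt=\pi\csc(\pi(\tfrac{1}{q}-\varepsilon))$ together with $\csc(\pi(\tfrac{1}{q}-\varepsilon))\to\csc(\frac{\pi}{p})$ as $\varepsilon\to0^+$. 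Raising to the $p$-th power, summing in $n$, dividing by $\|a^{(\varepsilon)}\|_p^p$ and letting $\varepsilon\to0^+$ again gives $\|\mathbf{H}\|\ge\pi\csc(\frac{\pi}{p})$. The one genuinely delicate step in this alternative route—and the place where essentially all of the work lies—is controlling the $o(1)$ errors in the integral comparisons uniformly enough in $n$ to survive the subsequent summation and the limit $\varepsilon\to0^+$; everything else is routine. Because Theorem~\ref{m-0} is a classical fact being merely recast in operator language, I would present the short argument via the optimality of (\ref{h}) as the actual proof.
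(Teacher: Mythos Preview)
Your proposal is correct and matches the paper's treatment: the paper does not give an independent proof of Theorem~\ref{m-0} but simply presents it as a restatement of inequality~(\ref{h}) together with the classical optimality of its constant from \cite[Theorem~6]{HLP-0} or \cite[Theorem~323]{HLP}, exactly as you do. Your added verification that $\mathbf{H}(a)(n)$ is well defined and your optional self-contained sharpness sketch go slightly beyond what the paper records, but the core argument is the same.
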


The Hilbert matrix operator is important in analysis and there have been many results about this operator and its analogues and generalizations. The history and classical results of this operator can be found in the famous monograph \cite{HLP}. Modern results about generalizations of the Hilbert matrix operator and their analogues can be found in the survey \cite{YR} and Yang's book \cite{Y3}. Some very new results about the Hilbert matrix operator have been obtained in \cite{DGP}. In \cite{D}, \cite{DS}, Diamantopoulos and Siskakis initiated the study of the Hilbert matrix operator acting on the analytic function spaces. Later, there have been a good number of researchers studying the Hilbert matrix operator and its generalizations on various spaces of analytic functions, see, for example, \cite{BK}, \cite{Dai}, \cite{D-1}, \cite{DJV}, \cite{GGPS}, \cite{GP}, \cite{GM}, \cite{GM-2}, \cite{JTF}, \cite{Ka}, \cite{LMN}, \cite{LMW}, \cite{LS},\cite{N}, \cite{PR}, \cite{TZ}, \cite{YZ1}. For more results about these topics, see the recent survey \cite{bds}.   

Note that the Hilbert kernel can be written as 
$$\frac{1}{m+n+1}=\binom{n+m}{m}\int_{0}^{1}t^m (1-t)^ndt.$$
Here, for $\gamma \in \mathbb{R}$, $m\in \mathbb{N}_0$, the combinatorial number $\binom{\gamma}{m}$ is defined as 
$$\binom{\gamma}{m}:=\frac{\gamma(\gamma-1)\cdots (\gamma-m+1)}{m!}.$$
In particular,  $\binom{\gamma}{0}=1$ for any $\gamma \in \mathbb{R}$. 

Very recently, inspired by the classical work of Hardy \cite{har}, the following generalized Hilbert matrix operator was introduced and studied by Athanasiou in \cite{Ath}. Let $\mu$ be a positive finite Borel measure on $(0,1)$, the
operator $H_{\mu}$ is defined as
$$H_{\mu}(a)(n):=\sum\limits_{m=0}^{\infty}\int_{(0,1)}\binom{n+m}{m}t^m (1-t)^{n}a_md\mu(t), a=\{a_m\}_{m=0}^{\infty},\, n \in \mathbb{N}_0.$$

It has been proved in \cite{Ath} that
\begin{theorem}\label{ath}
Let $\mu$ be a positive finite Borel measure on $(0,1)$. 

{\bf(1)} Let $1\leq p<+\infty$.  Then $H_{\mu}$ is bounded on $l^{p}$ if and only if 
$$\mathcal{C}_{\mu}(p)=\int_{(0,1)} (1-t)^{\frac{1}{p}-1} t^{-\frac{1}{p}}d\mu(t)<\infty.$$
Moreover, when $H_{\mu}:l^{p}\rightarrow l^{p}$ is bounded, the norm of $H_{\mu}$ is $\mathcal{C}_{\mu}(p).$

{\bf(2)} $H_{\mu}$ is bounded on $l^{\infty}$ if and only if 
$$\mathcal{C}_{\mu}(\infty)=\int_{(0,1)} (1-t)^{-1}d\mu(t)<\infty.$$
Moreover, when $H_{\mu}$ is bounded on $l^{\infty}$, the norm of $H_{\mu}$ is $\mathcal{C}_{\mu}(\infty).$
 
\end{theorem}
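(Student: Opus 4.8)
The plan is to treat the case $p=\infty$ (part (2)) separately from $1\le p<\infty$ (part (1)), and within part (1) to split the proof into "$\mathcal{C}_\mu(p)<\infty\Rightarrow H_\mu$ bounded with $\|H_\mu\|\le\mathcal{C}_\mu(p)$" and "$\|H_\mu\|\ge\mathcal{C}_\mu(p)$ always" (the latter yielding both sharpness of the norm and the necessity of finiteness). For part (2) everything is explicit: the kernel of $H_\mu$ is nonnegative, so Tonelli's theorem together with the elementary identity $\sum_{m\ge0}\binom{n+m}{m}t^m=(1-t)^{-n-1}$ (valid for $0<t<1$) gives, for every bounded $a$,
\[
|H_\mu(a)(n)|\le\|a\|_\infty\int_{(0,1)}(1-t)^n\sum_{m\ge0}\binom{n+m}{m}t^m\,d\mu(t)=\|a\|_\infty\,\mathcal{C}_\mu(\infty),
\]
so $\|H_\mu\|_{l^\infty\to l^\infty}\le\mathcal{C}_\mu(\infty)$ when this is finite; testing against $a\equiv1$ gives $H_\mu(a)(n)=\mathcal{C}_\mu(\infty)$ for every $n$, which supplies the reverse norm inequality, and truncating $a$ shows that $H_\mu$ is unbounded on $l^\infty$ when $\mathcal{C}_\mu(\infty)=\infty$.

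For the sufficiency half of part (1), write $H_\mu=\int_{(0,1)}T_t\,d\mu(t)$, where $T_t$ is the matrix operator with nonnegative kernel $K_t(n,m)=\binom{n+m}{m}t^m(1-t)^n$. The two instances $\sum_{m\ge0}\binom{n+m}{m}t^m=(1-t)^{-n-1}$ and $\sum_{n\ge0}\binom{n+m}{m}(1-t)^n=t^{-m-1}$ of the negative-binomial series show that every row sum of $K_t$ equals $(1-t)^{-1}$ and every column sum equals $t^{-1}$. Schur's test with the constant weight then yields $\|T_t\|_{l^p\to l^p}\le((1-t)^{-1})^{1/q}(t^{-1})^{1/p}=(1-t)^{1/p-1}t^{-1/p}$ for all $1\le p\le\infty$, and Minkowski's integral inequality in $l^p$ gives, for $a\in l^p$,
\[
\|H_\mu(a)\|_p\le\int_{(0,1)}\|T_t(a)\|_p\,d\mu(t)\le\Big(\int_{(0,1)}(1-t)^{1/p-1}t^{-1/p}\,d\mu(t)\Big)\|a\|_p=\mathcal{C}_\mu(p)\|a\|_p,
\]
so $H_\mu$ is bounded with norm at most $\mathcal{C}_\mu(p)$ as soon as $\mathcal{C}_\mu(p)<\infty$. (For $p=1$ one may instead simply use $\|H_\mu(a)\|_1\le\sum_m|a_m|\sum_nK_\mu(n,m)=\mathcal{C}_\mu(1)\|a\|_1$, where $K_\mu$ is the kernel of $H_\mu$.)

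The remaining — and most delicate — point is the lower bound $\|H_\mu\|_{l^p\to l^p}\ge\mathcal{C}_\mu(p)$ for $1\le p<\infty$; this simultaneously produces the norm identity and the necessity of $\mathcal{C}_\mu(p)<\infty$ (if $\mathcal{C}_\mu(p)=\infty$ it forces $\|H_\mu\|=\infty$, i.e.\ unboundedness). Since $\nu\le\mu$ gives $H_\nu\le H_\mu$ entrywise and hence $\|H_\nu\|\le\|H_\mu\|$ on $l^p$, it suffices — after replacing $\mu$ by $\mu|_{[t_0,1)}$ and sending $t_0\to0^+$ via monotone convergence at the very end — to prove the bound when $\mathrm{supp}\,\mu\subset[t_0,1)$. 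One then tests against the universal near-extremal vectors $a^\varepsilon=\{(m+1)^{-1/p-\varepsilon}\}_{m\ge0}\in l^p$, for which $\|a^\varepsilon\|_p^p=\sum_{m\ge0}(m+1)^{-1-p\varepsilon}\sim(p\varepsilon)^{-1}$ as $\varepsilon\to0^+$. Because the weights $\binom{n+m}{m}t^m(1-t)^{n+1}$ form the mass function of a negative binomial law concentrated about $m\approx(n+1)t/(1-t)$, a Chebyshev estimate, uniform for $t\in[t_0,1)$, provides for each $\eta>0$ a threshold $N=N(t_0,\eta)$ such that
\[
(1-t)^n\sum_{m\ge0}\binom{n+m}{m}t^m(m+1)^{-1/p-\varepsilon}\ge(1-\eta)(n+1)^{-1/p-\varepsilon}(1-t)^{1/p-1+\varepsilon}t^{-1/p-\varepsilon}
\]
for all $n\ge N$, $t\in[t_0,1)$ and $\varepsilon\in(0,1]$. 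Integrating against $d\mu(t)$, raising to the $p$th power, summing over $n\ge N$, dividing by $\|a^\varepsilon\|_p^p$, and letting $\varepsilon\to0^+$ — using Fatou's lemma on the $t$-integral and $\sum_{n\ge N}(n+1)^{-1-p\varepsilon}\sim(p\varepsilon)^{-1}$ — gives $\|H_\mu\|_{l^p\to l^p}\ge(1-\eta)\,\mathcal{C}_\mu(p)$, and then $\eta\to0^+$ concludes. The chief obstacle is precisely this last step: establishing the negative-binomial concentration uniformly in $t$ on $[t_0,1)$ and correctly interchanging the four limits in $n$, $\varepsilon$, $\eta$ and $t_0$; by contrast the upper bounds reduce to the two binomial identities together with Schur's test and Minkowski's inequality, and the case $p=1$ is immediate (test against $e_0=(1,0,0,\dots)$, whose image $\{\int_{(0,1)}(1-t)^n\,d\mu\}_n$ has $l^1$-norm $\int_{(0,1)}t^{-1}d\mu=\mathcal{C}_\mu(1)$).
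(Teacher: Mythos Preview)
Your proposal is correct. In this paper Theorem~\ref{ath} is quoted from \cite{Ath} and then recovered as the special case $\alpha=\beta=0$ of Theorems~\ref{main} and~\ref{main-2}; your upper-bound argument (Schur's test with constant weight on each $T_t$, then Minkowski's integral inequality) is exactly the specialization of the paper's Proposition~\ref{i}, which uses the same two negative-binomial identities (Lemma~\ref{l-2}) together with H\"older and Minkowski. The genuine difference lies in the lower bound for $1<p<\infty$. To bound $E_n(t)=\sum_m\binom{n+m}{m}t^m(1-t)^n(m+1)^{-b}$ from below, the paper (following \cite{Ath}) writes $(m+1)^{-b}$ as a Gamma integral (Lemma~\ref{l-4}), sums the resulting series via Lemma~\ref{l-2}, and then invokes the pointwise inequality $(1-t)/(1-te^{-x})\ge e^{-tx/(1-t)}$ of Lemma~\ref{l-5} to arrive at the explicit closed-form minorant $G_n(t)=(1-t)^{b-1}(1+nt)^{-b}$ appearing in Claim~\ref{cla}. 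You instead interpret $\binom{n+m}{m}t^m(1-t)^{n+1}$ as a negative-binomial mass function and appeal to Chebyshev concentration. This works; in fact Jensen's inequality for the convex map $x\mapsto(x+1)^{-b}$ already gives $E_n(t)\ge(1-t)^{b-1}(1+nt)^{-b}$, i.e.\ exactly the paper's $G_n(t)$, so in the unweighted case your probabilistic route is strictly shorter and bypasses Lemma~\ref{l-5} altogether. The trade-off is that the Gamma-integral technique carries over verbatim to the weighted kernels $k_{\alpha,\beta}(m,n)$ of Theorems~\ref{main} and~\ref{main-2}, whereas the Jensen/Chebyshev argument, as you state it, is tied to the specific binomial weights. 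Your one-sided truncation $\mu\mapsto\mu|_{[t_0,1)}$ (versus the paper's two-sided restriction to $[\eta,1-\eta]$) is also fine, precisely because the concentration estimate is uniform in $t$ all the way up to $1$.
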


The main purpose of this note is to establish an extension of above results obtained in \cite{Ath}.  To state our results, we first introduce some notations. Let $w(n)\geq 0$ for $n\in \mathbb{N}_0$. For $1\leq p<\infty$, we denote by $l_{w}^{p}$ the weighted Lesbegue space of infinite sequences, i.e.,
\begin{equation*}l_{w}^{p}:=\{a=\{a_n\}_{n=0}^{\infty}: \|a\|_{p,w}=(\sum_{n=0}^{\infty} w(n)|a_{n}|^p)^{\frac{1}{p}}<\infty \}.\end{equation*}
We define the class of infinite sequences $l_{w}^{\infty}$ as 
\begin{equation*}l_{w}^{\infty}:=\{a=\{a_n\}_{n=0}^{\infty}: \|a\|_{\infty, w}=\sup_{n\in \mathbb{N}_0}w(n)|a_{n}|<\infty \}.\end{equation*}
When $w(n)\equiv1$, we will write $l^{p}$ instead of $l_{w}^{p}$, where $1\leq p \leq \infty$. 

Let $\alpha, \beta>-1$, $\beta-\alpha>-1$ and let $\mu$ be a positive finite Borel measure on $(0,1)$. For a sequence $a=\{a_m\}_{m=0}^{\infty}$, we define
$$H_{\mu}^{\alpha, \beta}(a)(n):=\sum\limits_{m=0}^{\infty}\int_{(0,1)}\frac{\Gamma(n+m+\beta+1)}{\Gamma(m+\alpha+1)\Gamma(n+\beta-\alpha+1)}t^m (1-t)^{n}a_md\mu(t), \,  n \in \mathbb{N}_0.$$
Here, $\Gamma$ is the usual Gamma function, see \cite{AAR}. When $\alpha=0$, $H_{\mu}^{\alpha, \beta}$ reduces to the following operator
$$H_{\mu}^{\beta}(a)(n):=\sum\limits_{m=0}^{\infty}\int_{(0,1)}\binom{n+m+\beta}{m}t^m (1-t)^{n}a_md\mu(t),\,  n \in \mathbb{N}_0.$$

For two real numbers $s, \gamma$ with $\gamma>-1$ and $\gamma+s>-1$, we define  
$$(\gamma+1)_s:=\frac{\Gamma(\gamma+s+1)}{\Gamma(\gamma+1)}.$$
%When $\gamma \in \mathbb{N}_0$, $(\gamma+1)_s=\frac{\Gamma(\gamma+s+1)}{\gamma!}$ is the Pochhammer symbol. 

We shall prove that
\begin{theorem}\label{main}
Let $\mu$ be a positive finite Borel measure on $(0,1)$. Let $1\leq p<\infty$, and $\alpha, \beta>-1$, $\beta-\alpha>-1$. Then $H_{\mu}^{\alpha,\beta}$ is bounded from $l_{w_1}^{p}$ into $l_{w_2}^{p}$ if and only if 
$$\mathcal{C}_{\mu}(\beta, p):=\int_{(0,1)} (1-t)^{-(1-\frac{1}{p})(\beta+1)} t^{-\frac{1}{p}(\beta+1)}d\mu(t)<\infty.$$
Moreover, when $H_{\mu}^{\alpha, \beta}:l_{w_1}^{p}\rightarrow l_{w_2}^{p}$ is bounded, the norm $\|H_{\mu}^{\alpha,\beta}\|$ of $H_{\mu}^{\alpha, \beta}$ is $\mathcal{C}_{\mu}(\beta, p).$
Here $$w_1(m)=(m+1)_{\alpha}^{-p} (m+1)_{\beta},$$
 $$w_2(m)=(m+\beta-\alpha+1)_{\alpha}^{-p} (m+1)_{\beta}, \,\, m\in \mathbb{N}_0. $$
\end{theorem}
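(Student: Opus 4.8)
The plan is to strip off the parameter $\alpha$ and both weights by an explicit diagonal change of variables, reducing the statement to a single unweighted $l^p$ inequality, and then to prove a sharp estimate for the ``single point mass'' operator. First I would observe that $a\mapsto(w_1(m)^{1/p}a_m)_m$ and $y\mapsto(w_2(n)^{1/p}y_n)_n$ are isometries of $l_{w_1}^{p}$ and $l_{w_2}^{p}$ onto $l^{p}$, so $H_\mu^{\alpha,\beta}:l_{w_1}^{p}\to l_{w_2}^{p}$ is bounded, with a prescribed norm, if and only if $\Phi:=D_2H_\mu^{\alpha,\beta}D_1^{-1}$ is bounded on $l^{p}$ with the same norm, where $D_i$ is multiplication by $w_i^{1/p}$. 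A short computation with the Gamma--function identities --- in which the $\alpha$-dependent factors $\Gamma(m+\alpha+1)^{-1}$ and $\Gamma(n+\beta-\alpha+1)^{-1}$ cancel exactly against the weights --- gives
$$\Phi(b)(n)=(n+1)_\beta^{1/p}\int_{(0,1)}(1-t)^n\sum_{m=0}^{\infty}\binom{n+m+\beta}{m}\frac{t^m}{(m+1)_\beta^{1/p}}\,b_m\,d\mu(t),$$
that is, $\Phi$ has matrix $\Phi_{n,m}=\binom{n+m+\beta}{m}(n+1)_\beta^{1/p}(m+1)_\beta^{-1/p}\int_{(0,1)}t^m(1-t)^n\,d\mu(t)$. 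So it suffices to show that $\Phi$ is bounded on $l^{p}$ if and only if $\mathcal C_\mu(\beta,p)<\infty$, in which case $\|\Phi\|=\mathcal C_\mu(\beta,p)$.

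For boundedness and the inequality $\|\Phi\|\le\mathcal C_\mu(\beta,p)$, I would apply Minkowski's integral inequality in the variable $t$ to get $\|\Phi(b)\|_p\le\int_{(0,1)}\|\Phi_t(b)\|_p\,d\mu(t)$, where $\Phi_t$ is $\Phi$ with $\mu$ replaced by the unit mass at $t$, and then prove the sharp estimate $\|\Phi_t(b)\|_p\le\big[(1-t)^{1/p-1}t^{-1/p}\big]^{\beta+1}\|b\|_p$. To this end set $c_m:=(m+1)_\beta^{-1/p}b_m$; since $\sum_m\binom{n+m+\beta}{m}t^m=(1-t)^{-(n+\beta+1)}$, the numbers $\pi_n(m):=\binom{n+m+\beta}{m}t^m(1-t)^{n+\beta+1}$ form a probability distribution (a negative binomial law) on $\mathbb{N}_0$, so Jensen's inequality for $x\mapsto|x|^p$ yields $(1-t)^{np}\big|\sum_m\binom{n+m+\beta}{m}t^mc_m\big|^p\le(1-t)^{-(\beta+1)p}\sum_m\pi_n(m)|c_m|^p$. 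Since $\|\Phi_t(b)\|_p^p=\sum_n(n+1)_\beta(1-t)^{np}\big|\sum_m\binom{n+m+\beta}{m}t^mc_m\big|^p$, multiplying this estimate by $(n+1)_\beta$, summing over $n$, and then invoking the exact identity
$$\sum_{n=0}^{\infty}(n+1)_\beta\binom{n+m+\beta}{m}(1-t)^n=(m+1)_\beta\,t^{-(m+\beta+1)}$$
(itself two applications of the generalized binomial series) collapses the bound to $(1-t)^{-(\beta+1)(p-1)}t^{-(\beta+1)}\sum_m(m+1)_\beta|c_m|^p=\big[(1-t)^{1/p-1}t^{-1/p}\big]^{p(\beta+1)}\|b\|_p^p$, because $(m+1)_\beta|c_m|^p=|b_m|^p$. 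Integrating against $\mu$ gives $\|\Phi\|\le\mathcal C_\mu(\beta,p)$, hence boundedness when $\mathcal C_\mu(\beta,p)<\infty$.

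For the reverse inequality $\|\Phi\|\ge\mathcal C_\mu(\beta,p)$ --- which, read as an inequality with $\|\Phi\|=\infty$ when $\Phi$ is unbounded, also yields the necessity of the condition --- the case $p=1$ is immediate: $\|\Phi\|_{l^1\to l^1}=\sup_m\sum_n\Phi_{n,m}$, and the identity above shows $\sum_n\Phi_{n,m}=\int_{(0,1)}t^{-(\beta+1)}\,d\mu(t)=\mathcal C_\mu(\beta,1)$ for every $m$. For $1<p<\infty$ I would test the bilinear form on the ``boundary'' sequences $u_m=(m+1)^{-1/p-\varepsilon}$ and $v_n=(n+1)^{-1/q-\varepsilon}$, $\varepsilon>0$. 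Here $\varepsilon\,\|u\|_p\|v\|_q\to p^{-1/p}q^{-1/q}$ as $\varepsilon\to0^{+}$, while $\langle\Phi u,v\rangle=\int_{(0,1)}D_\varepsilon(t)\,d\mu(t)$ with $D_\varepsilon(t)=\sum_{n,m}\binom{n+m+\beta}{m}(n+1)_\beta^{1/p}(m+1)_\beta^{-1/p}(m+1)^{-1/p-\varepsilon}(n+1)^{-1/q-\varepsilon}t^m(1-t)^n$. Using the elementary asymptotics $(k+1)_\beta\sim(k+1)^\beta$ and $\sum_k k^a x^k\sim\Gamma(a+1)(1-x)^{-a-1}$ ($x\to1^{-}$), together with a comparison of the remaining sums with integrals, I would show $\liminf_{\varepsilon\to0^{+}}\varepsilon D_\varepsilon(t)\ge p^{-1/p}q^{-1/q}(1-t)^{-(1-1/p)(\beta+1)}t^{-(\beta+1)/p}$ for each $t\in(0,1)$; Fatou's lemma then gives $\liminf_{\varepsilon\to0^{+}}\varepsilon\langle\Phi u,v\rangle\ge p^{-1/p}q^{-1/q}\mathcal C_\mu(\beta,p)$, and combined with $\langle\Phi u,v\rangle\le\|\Phi\|\,\|u\|_p\|v\|_q$ this yields $\mathcal C_\mu(\beta,p)\le\|\Phi\|$.

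The main obstacle is this last step: turning the heuristic asymptotics for $D_\varepsilon(t)$ into a rigorous lower bound with the correct constant, controlling the error terms uniformly enough in $t\in(0,1)$ --- especially near $t=0$ and $t=1$ --- to survive the double summation and the passage to the limit under the integral against $\mu$. Everything else is routine once the two closed forms $\sum_m\binom{n+m+\beta}{m}t^m=(1-t)^{-(n+\beta+1)}$ and $\sum_n(n+1)_\beta\binom{n+m+\beta}{m}(1-t)^n=(m+1)_\beta\,t^{-(m+\beta+1)}$ are in hand. It is worth noting why the spread-out sequences $u,v$ are essential: the naive extremizers, the $\mu$-independent weighted geometric sequences $b_m=(m+1)_\beta^{1/p}\rho^m$ with $\rho\to1^{-}$, produce the sharp constant only when $\mu$ is a point mass, since for a general $\mu$ Minkowski's inequality then runs the wrong way and the resulting limit falls strictly below $\mathcal C_\mu(\beta,p)$.
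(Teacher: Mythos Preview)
Your upper bound is correct and is, in substance, the paper's argument: the diagonal change of variables is a clean way to remove $\alpha$, and after that your Jensen step against the negative--binomial weights is exactly the H\"older estimate the paper uses (its Lemma~2.2 records the two closed forms you invoke), followed by Minkowski in~$t$.

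The lower bound, however, has a genuine gap. With your test pair $u_m=(m+1)^{-1/p-\varepsilon}$, $v_n=(n+1)^{-1/q-\varepsilon}$ the claimed asymptotic $\liminf_{\varepsilon\to0^+}\varepsilon D_\varepsilon(t)\ge p^{-1/p}q^{-1/q}(1-t)^{-(\beta+1)/q}t^{-(\beta+1)/p}$ is \emph{false} when $p\ne2$: the actual limit equals $\tfrac12(1-t)^{-(\beta+1)/q}t^{-(\beta+1)/p}$. To see this in the model case $\beta=0$, write $D_\varepsilon(t)=\sum_{j\ge0}\mathbb{E}\big[(K+1)^{-1/p-\varepsilon}(j-K+1)^{-1/q-\varepsilon}\big]$ with $K\sim\mathrm{Bin}(j,t)$; concentration at $K\approx jt$ gives the inner expectation $\sim j^{-1-2\varepsilon}t^{-1/p-\varepsilon}(1-t)^{-1/q-\varepsilon}$ and hence $\varepsilon D_\varepsilon(t)\to\tfrac12\,t^{-1/p}(1-t)^{-1/q}$. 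Since $p^{1/p}q^{1/q}\le2$ with equality only at $p=2$, one has $\tfrac12<p^{-1/p}q^{-1/q}$ for $p\ne2$, so Fatou then yields only
\[
\|\Phi\|\ \ge\ \frac{p^{1/p}q^{1/q}}{2}\,\mathcal C_\mu(\beta,p)\ <\ \mathcal C_\mu(\beta,p).
\]
(For Lebesgue $\mu$ and $\beta=0$ this is the familiar fact that $\sum_{m,n}\frac{m^{-1/p-\varepsilon}n^{-1/q-\varepsilon}}{m+n}\sim\frac{\pi\csc(\pi/p)}{2\varepsilon}$ while $\|u\|_p\|v\|_q\sim p^{-1/p}q^{-1/q}\varepsilon^{-1}$, giving the suboptimal ratio $\tfrac{p^{1/p}q^{1/q}}{2}\pi\csc(\pi/p)$.) The reason is that for fixed $u$ the extremal dual vector is $v_n\propto(\Phi u)_n^{\,p-1}$, which behaves like $(n+1)^{-1/q-(p-1)\varepsilon}$, not $(n+1)^{-1/q-\varepsilon}$.

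The fix is either to take $v_n=(n+1)^{-1/q-(p-1)\varepsilon}$ in the bilinear test, or --- much more simply --- to drop the bilinear detour and estimate $\|\Phi u\|_p$ directly from below, which is exactly what the paper does: with $a_m=(m+1)_\alpha(m+1)_\beta^{-1/p}(m+\beta+1)^{-1/p-\varepsilon}$ it obtains a pointwise lower bound $[w_2(n)]^{1/p}A_n\ge(1-\rho)^2[w_1(n)]^{1/p}a_n\int_\eta^{1-\eta}(1-t)^{-(\beta+1)/q+\varepsilon}t^{-(\beta+1)/p-\varepsilon}d\mu(t)$ for all large $n$ (its Claim~3.3, proved via the integral representation $y^{-z}=\Gamma(z)^{-1}\int_0^\infty e^{-yx}x^{z-1}dx$ and the elementary inequality $(1-t)(1-te^{-x})^{-1}\ge e^{-t(1-t)^{-1}x}$), and then lets $\varepsilon,\eta,\rho\to0$ in that order. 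As a side remark, your worry about uniformity in $t$ near $0$ and $1$ is unnecessary for the Fatou step --- a pointwise liminf is all that is needed there --- so the real obstruction is the one above, not endpoint control.
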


\begin{theorem}\label{main-2}
Let $\mu$ be a positive finite Borel measure on $(0,1)$. Let $\alpha, \beta>-1$, $\beta-\alpha>-1$. Then $H_{\mu}^{\alpha,\beta}$ is bounded from $l_{\overline{w}_1}^{\infty}$ into $l_{\overline{w}_2}^{\infty}$ if and only if 
$$\mathcal{C}_{\mu}(\beta, \infty):=\int_{(0,1)} (1-t)^{-\beta-1}d\mu(t)<\infty.$$
Moreover, when $H_{\mu}^{\alpha, \beta}: l_{\overline{w}_1}^{\infty}\rightarrow l_{\overline{w}_2}^{\infty}$ is bounded, the norm $\|H_{\mu}^{\alpha,\beta}\|$ of $H_{\mu}^{\alpha, \beta}$ is $\mathcal{C}_{\mu}(\beta,\infty).$
Here $$\overline{w}_1(m)=(m+1)_{\alpha}^{-1}, \, \overline{w}_2(m)=(m+\beta-\alpha+1)_{\alpha}^{-1}, \,\, m\in \mathbb{N}_0.$$
\end{theorem}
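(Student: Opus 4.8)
The plan is to deduce Theorem~\ref{main-2} from the $\alpha=0$ case by a change of unknown that is built into the weights $\overline{w}_1,\overline{w}_2$. First I would note that, under the standing hypotheses $\alpha,\beta>-1$ and $\beta-\alpha>-1$, all the Gamma factors occurring below are finite and strictly positive, so $\overline{w}_1(m)=(m+1)_\alpha^{-1}$ and $\overline{w}_2(n)=(n+\beta-\alpha+1)_\alpha^{-1}$ are well-defined positive weights. Using $(m+1)_\alpha=\Gamma(m+\alpha+1)/m!$ and $(n+\beta-\alpha+1)_\alpha=\Gamma(n+\beta+1)/\Gamma(n+\beta-\alpha+1)$, one checks the algebraic identity
\[
\frac{(m+1)_\alpha}{(n+\beta-\alpha+1)_\alpha}\cdot\frac{\Gamma(n+m+\beta+1)}{\Gamma(m+\alpha+1)\,\Gamma(n+\beta-\alpha+1)}=\binom{n+m+\beta}{m}.
\]
Hence, setting $b_m=\overline{w}_1(m)a_m$, we get $\overline{w}_2(n)\,H_\mu^{\alpha,\beta}(a)(n)=H_\mu^{\beta}(b)(n)$ for every $n\in\mathbb{N}_0$. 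Since $a\mapsto(\overline{w}_1(m)a_m)_m$ and $y\mapsto(\overline{w}_2(n)y_n)_n$ are isometric isomorphisms onto $l^\infty$, the statement of Theorem~\ref{main-2} is equivalent to: $H_\mu^{\beta}$ is bounded on $l^\infty$ if and only if $\mathcal{C}_\mu(\beta,\infty)<\infty$, in which case its norm equals $\mathcal{C}_\mu(\beta,\infty)$. (One could instead carry the weights through every estimate, but the reduction keeps the bookkeeping minimal.)

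For the upper bound I would invoke the binomial series $\sum_{m=0}^{\infty}\binom{k+m}{m}t^m=(1-t)^{-k-1}$ with $k=n+\beta$, that is, $\sum_{m=0}^{\infty}\binom{n+m+\beta}{m}t^m(1-t)^n=(1-t)^{-\beta-1}$ for $0<t<1$. Then for $b\in l^\infty$, since all the summands are nonnegative, Tonelli's theorem permits interchanging the sum over $m$ with the $\mu$-integral, giving
\[
|H_\mu^{\beta}(b)(n)|\le\|b\|_\infty\int_{(0,1)}\Big(\sum_{m=0}^{\infty}\binom{n+m+\beta}{m}t^m(1-t)^n\Big)d\mu(t)=\mathcal{C}_\mu(\beta,\infty)\,\|b\|_\infty,
\]
uniformly in $n$. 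Thus, when $\mathcal{C}_\mu(\beta,\infty)<\infty$, $H_\mu^{\beta}$ maps $l^\infty$ into $l^\infty$ with $\|H_\mu^{\beta}\|\le\mathcal{C}_\mu(\beta,\infty)$.

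For the lower bound and the necessity I would test on the constant sequence $b\equiv 1$ (equivalently $a_m=(m+1)_\alpha$, which lies in $l_{\overline{w}_1}^\infty$ with norm $1$): the chain of estimates above becomes an equality throughout, so $H_\mu^{\beta}(b)(n)=\mathcal{C}_\mu(\beta,\infty)$ for every $n$. If $H_\mu^{\beta}$ is bounded on $l^\infty$ this forces $\mathcal{C}_\mu(\beta,\infty)<\infty$ and $\|H_\mu^{\beta}\|\ge\mathcal{C}_\mu(\beta,\infty)$; together with the previous paragraph this yields $\|H_\mu^{\beta}\|=\mathcal{C}_\mu(\beta,\infty)$, and undoing the reduction gives $\|H_\mu^{\alpha,\beta}\|=\mathcal{C}_\mu(\beta,\infty)$.

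I do not anticipate a serious obstacle: the argument is essentially the $p=\infty$ instance of the scheme underlying Theorem~\ref{ath}, with the weights chosen precisely so that $H_\mu^{\alpha,\beta}$ collapses to $H_\mu^{\beta}$. The only points requiring a little care are the strict positivity of the Gamma factors (so that the weights and the test sequence are legitimate), the justification of the term-by-term interchange (immediate from nonnegativity via Tonelli), and the observation that the constant sequence is exactly the extremizer, which is what makes the norm identity hold with no loss.
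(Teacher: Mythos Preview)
Your proof is correct and follows essentially the same approach as the paper: both use the generalized binomial series identity (the paper's Lemma~\ref{l-2}) to get the upper bound and test on the sequence $a_m=(m+1)_\alpha$ to obtain the matching lower bound and necessity. The only cosmetic difference is that you first make explicit the isometric conjugation reducing $H_\mu^{\alpha,\beta}$ on the weighted spaces to $H_\mu^{\beta}$ on $l^\infty$, whereas the paper carries the weights $\overline{w}_1,\overline{w}_2$ through the computation directly; the underlying estimates are identical.
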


When $\alpha=0$, from Theorem \ref{main} and \ref{main-2},  we have
\begin{corollary}\label{cor}
Let $\mu$ be a positive finite Borel measure on $(0,1)$. Let $\beta>-1$.

{\bf(1)} Let $1\leq p<\infty$. Then $H_{\mu}^{\beta}$ is bounded on $l_{w}^{p}$ if and only if 
$$\mathcal{C}_{\mu}(\beta, p)=\int_{(0,1)} (1-t)^{-(1-\frac{1}{p})(\beta+1)} t^{-\frac{1}{p}(\beta+1)}d\mu(t)<\infty.$$
Moreover, when $H_{\mu}^{\beta}$ is bounded on $l_{w}^{p}$, the norm of $H_{\mu}^{\beta}$ is $\mathcal{C}_{\mu}({\beta,p}).$ Here, $$w(m)=(m+1)_{\beta},\,\, m\in \mathbb{N}_0.$$

{\bf(2)} $H_{\mu}^{\beta}$ is bounded on $l^{\infty}$ if and only if 
$$\mathcal{C}_{\mu}(\beta, \infty)=\int_{(0,1)} (1-t)^{-\beta-1}d\mu(t)<\infty.$$
Moreover, when $H_{\mu}^{\beta}$ is bounded on $l^{\infty}$, the norm of $H_{\mu}^{\beta}$ is $\mathcal{C}_{\mu}({\beta,\infty}).$

\end{corollary}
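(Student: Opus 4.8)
The plan is to derive Corollary \ref{cor} directly from Theorems \ref{main} and \ref{main-2} by setting $\alpha=0$. The two conditions $\mathcal{C}_{\mu}(\beta,p)<\infty$ and $\mathcal{C}_{\mu}(\beta,\infty)<\infty$ are literally the same expressions appearing in the two theorems, so the only real content is to check that the weights degenerate correctly when $\alpha = 0$. First I would recall that $(\gamma+1)_s = \Gamma(\gamma+s+1)/\Gamma(\gamma+1)$, so that $(m+1)_0 = \Gamma(m+1)/\Gamma(m+1) = 1$ for every $m\in\mathbb{N}_0$. Consequently, in Theorem \ref{main} the weight $w_1(m) = (m+1)_0^{-p}(m+1)_\beta = (m+1)_\beta$ and $w_2(m) = (m+\beta+1)_0^{-p}(m+1)_\beta = (m+1)_\beta$; the two weights coincide, so $l_{w_1}^p = l_{w_2}^p = l_w^p$ with $w(m) = (m+1)_\beta$, and boundedness ``from $l_{w_1}^p$ into $l_{w_2}^p$'' becomes boundedness on $l_w^p$. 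This gives part (1), including the norm identity $\|H_\mu^\beta\| = \mathcal{C}_\mu(\beta,p)$.

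Next I would treat part (2) analogously, applying Theorem \ref{main-2} with $\alpha=0$. There $\overline{w}_1(m) = (m+1)_0^{-1} = 1$ and $\overline{w}_2(m) = (m+\beta+1)_0^{-1} = 1$ for all $m$, so $l_{\overline{w}_1}^\infty = l_{\overline{w}_2}^\infty = l^\infty$ in the unweighted sense defined in the introduction (recall the convention that $l^p$ denotes $l_w^p$ with $w\equiv 1$). Hence $H_\mu^\beta$ is bounded on $l^\infty$ if and only if $\mathcal{C}_\mu(\beta,\infty) = \int_{(0,1)}(1-t)^{-\beta-1}\,d\mu(t)<\infty$, and in that case $\|H_\mu^\beta\| = \mathcal{C}_\mu(\beta,\infty)$. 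Finally I would note that, by definition, $H_\mu^{\alpha,\beta}$ reduces to $H_\mu^\beta$ precisely when $\alpha=0$, since $\binom{n+m+\beta}{m} = \frac{\Gamma(n+m+\beta+1)}{\Gamma(m+1)\Gamma(n+\beta+1)}$, matching the kernel of $H_\mu^{\alpha,\beta}$ at $\alpha=0$.

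Since this is a pure specialization, there is essentially no obstacle: the argument is a one-line substitution into each of the two theorems together with the elementary identity $(\gamma+1)_0 = 1$. The only point requiring a moment of care is confirming that both pairs of weights collapse to a single common weight (so that the ``from $X$ into $Y$'' statements become genuine ``bounded on $X$'' statements), and that the resulting weight $w\equiv 1$ in the $l^\infty$ case matches the paper's convention for the unweighted space. I would state these verifications explicitly and then invoke Theorems \ref{main} and \ref{main-2} to conclude.
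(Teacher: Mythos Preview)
Your proposal is correct and matches the paper's approach exactly: the paper simply states the corollary as following from Theorems \ref{main} and \ref{main-2} by taking $\alpha=0$, without further argument. Your explicit verification that $(\gamma+1)_0=1$ collapses both weight pairs to $w(m)=(m+1)_\beta$ (respectively $w\equiv 1$) and that the kernel reduces to $\binom{n+m+\beta}{m}$ is precisely the content of that specialization.
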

\begin{remark}
Theorem \ref{ath} will follow if we take $\beta=0$ in Corollary \ref{cor}.  
\end{remark}

The paper is organized as follows. Some lemmas will be given in the next section.  We will first prove Theorem \ref{main} in Section 3. The proof of Theorem \ref{main-2} will be given in Section 4.  Final remarks will be presented in Section 5.

\section{{\bf Some lemmas}}
To prove the main results, in this section, we will list some known lemmas and establish some new ones. 
For the sake of simplicity, in the rest of the paper, we will write 
$$k_{\alpha,\beta}(m,n)=\frac{\Gamma(n+m+\beta+1)}{\Gamma(m+\alpha+1)\Gamma(n+\beta-\alpha+1)}.$$
\begin{lemma}\label{l-1}
Let $m, n \in \mathbb{N}_0$ and let $\alpha, \beta>-1$, $\beta-\alpha>-1$. Then we have
$$k_{\alpha,\beta}(m,n)=(-1)^m \binom{-n-\beta-1}{m}(m+1)_{\alpha}^{-1}(n+\beta-\alpha+1)_{\alpha},$$
and 
$$k_{\alpha,\beta}(m,n)=(-1)^n \binom{-m-\beta-1}{n}(n+1)_{\beta-\alpha}^{-1}(m+\alpha+1)_{\beta-\alpha}.$$
\end{lemma}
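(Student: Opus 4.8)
The plan is to reduce both identities to elementary manipulations of the Gamma function combined with the definition of the generalized binomial coefficient $\binom{\gamma}{m}$. First I would expand the right-hand side of the first identity. By the definition of $(m+1)_\alpha$ and $(n+\beta-\alpha+1)_\alpha$ given in the introduction, we have $(m+1)_\alpha^{-1} = \Gamma(m+1)/\Gamma(m+\alpha+1)$ and $(n+\beta-\alpha+1)_\alpha = \Gamma(n+\beta+1)/\Gamma(n+\beta-\alpha+1)$. Hence
\begin{equation*}
(m+1)_\alpha^{-1}(n+\beta-\alpha+1)_\alpha = \frac{\Gamma(m+1)\,\Gamma(n+\beta+1)}{\Gamma(m+\alpha+1)\,\Gamma(n+\beta-\alpha+1)}.
\end{equation*}
Comparing with $k_{\alpha,\beta}(m,n) = \Gamma(n+m+\beta+1)/[\Gamma(m+\alpha+1)\Gamma(n+\beta-\alpha+1)]$, the claimed identity is equivalent to
\begin{equation*}
(-1)^m \binom{-n-\beta-1}{m} = \frac{\Gamma(n+m+\beta+1)}{\Gamma(m+1)\,\Gamma(n+\beta+1)}.
\end{equation*}

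The second step is to verify this last equality, which is just the standard "negation of the upper index" formula for binomial coefficients applied to a real (non-integer) parameter. Writing $\gamma = -n-\beta-1$ and using $\binom{\gamma}{m} = \gamma(\gamma-1)\cdots(\gamma-m+1)/m!$, each factor $\gamma - j = -(n+\beta+1+j)$ contributes a minus sign, so $(-1)^m\binom{\gamma}{m} = (n+\beta+1)(n+\beta+2)\cdots(n+\beta+m)/m!$. The numerator is a ratio of Gamma functions, $\Gamma(n+\beta+m+1)/\Gamma(n+\beta+1)$, and $m! = \Gamma(m+1)$, which gives exactly the right-hand side above. (Here the hypotheses $\beta>-1$, $\beta-\alpha>-1$ ensure all the Gamma-function arguments $m+\alpha+1$, $n+\beta-\alpha+1$, $n+\beta+1$ are positive, so no poles are encountered and all Pochhammer symbols are well defined.) This establishes the first identity.

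For the second identity the argument is entirely parallel with the roles of $m$ and $n$ interchanged and $\alpha$ replaced by $\beta-\alpha$: one expands $(n+1)_{\beta-\alpha}^{-1} = \Gamma(n+1)/\Gamma(n+\beta-\alpha+1)$ and $(m+\alpha+1)_{\beta-\alpha} = \Gamma(m+\beta+1)/\Gamma(m+\alpha+1)$, reducing the claim to $(-1)^n\binom{-m-\beta-1}{n} = \Gamma(n+m+\beta+1)/[\Gamma(n+1)\Gamma(m+\beta+1)]$, which is the same negation-of-upper-index identity with $(n,\ m+\beta)$ in place of $(m,\ n+\beta)$. I expect no real obstacle here; the only point requiring a little care is bookkeeping the signs and confirming that the generalized binomial coefficient, defined via the falling factorial, really does satisfy the Gamma-function identity when the upper index is an arbitrary negative real rather than a negative integer — but this is immediate from the definition since the product telescopes into a Pochhammer symbol regardless of integrality.
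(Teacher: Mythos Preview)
Your proof is correct and follows essentially the same route as the paper: both arguments reduce the identity to the elementary ``negation of the upper index'' formula $(-1)^m\binom{-n-\beta-1}{m}=\prod_{i=1}^m(n+\beta+i)/m!=\Gamma(n+m+\beta+1)/[\Gamma(m+1)\Gamma(n+\beta+1)]$, the only difference being that the paper starts from $k_{\alpha,\beta}(m,n)$ and factors out the product, whereas you start from the right-hand side and reduce to the same product identity.
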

\begin{proof}
Note that \begin{eqnarray}\Gamma(n+m+\beta+1)&=&\Gamma(n+\beta+1)\prod_{i=1}^{m}(n+\beta+i)\nonumber \\
&=& \Gamma(n+\beta+1)(-1)^m\prod_{i=1}^{m}(-n-\beta-i). \nonumber\end{eqnarray}
Then
\begin{eqnarray}\lefteqn{k_{\alpha,\beta}(m,n)=\frac{\Gamma(n+m+\beta+1)}{\Gamma(n+\beta-\alpha+1)}\cdot\frac{\Gamma(m+1)}{\Gamma(m+\alpha+1)}\cdot\frac{1}{\Gamma(m+1)}}\nonumber \\
&&= (-1)^m \frac{\prod_{i=1}^{m}(-n-\beta-i)}{\Gamma(m+1)}\cdot\frac{\Gamma(m+1)}{ \Gamma(m+\alpha+1)}\cdot\frac{\Gamma(n+\beta+1)}{\Gamma(n+\beta-\alpha+1)} \nonumber \\
&&=(-1)^m \binom{-n-\beta-1}{m}(m+1)_{\alpha}^{-1}(n+\beta-\alpha+1)_{\alpha}. \nonumber
\end{eqnarray}
Similarly, we have \begin{eqnarray}\Gamma(n+m+\beta+1)&=&\Gamma(m+\beta+1)\prod_{i=1}^{n}(m+\beta+i)\nonumber \\
&=& \Gamma(m+\beta+1)(-1)^n \prod_{i=1}^{n}(-m-\beta-i). \nonumber  \end{eqnarray}
It follows that
\begin{eqnarray}\lefteqn{k_{\alpha,\beta}(m,n)=\frac{\Gamma(n+m+\beta+1)}{\Gamma(m+\alpha+1)}\cdot\frac{\Gamma(n+1)}{ \Gamma(n+\beta-\alpha+1)}\cdot\frac{1}{\Gamma(n+1)}} \nonumber \\
&&= (-1)^n \frac{\prod_{i=1}^{n}(-m-\beta-i)}{\Gamma(n+1)}\cdot\frac{\Gamma(n+1)}{ \Gamma(n+\beta-\alpha+1)}\cdot\frac{\Gamma(m+\beta+1)}{\Gamma(m+\alpha+1)} \nonumber \\
&&=(-1)^n \binom{-m-\beta-1}{n}(n+1)_{\beta-\alpha}^{-1}(m+\alpha+1)_{\beta-\alpha}. \nonumber
\end{eqnarray}
This proves the lemma.
\end{proof}

\begin{lemma}\label{l-2} Let $m, n \in \mathbb{N}_0$ and let $\alpha, \beta>-1$, $\beta-\alpha>-1$. Then we have
\begin{equation}
\sum_{m=0}^{\infty}k_{\alpha, \beta}(m,n)t^m(m+1)_{\alpha}=(1-t)^{-n-\beta-1}(n+\beta-\alpha+1)_{\alpha}, \nonumber 
\end{equation} 
and
\begin{equation}
\sum_{n=0}^{\infty}k_{\alpha, \beta}(m,n)(1-t)^n(n+1)_{\beta-\alpha}=t^{-m-\beta-1}(m+\alpha+1)_{\beta-\alpha}. \nonumber 
\end{equation} \end{lemma}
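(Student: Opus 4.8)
The plan is to recognize each sum as a binomial series in disguise, using the closed forms for $k_{\alpha,\beta}(m,n)$ established in Lemma~\ref{l-1}. For the first identity, I substitute the first formula of Lemma~\ref{l-1}, namely $k_{\alpha,\beta}(m,n)=(-1)^m\binom{-n-\beta-1}{m}(m+1)_\alpha^{-1}(n+\beta-\alpha+1)_\alpha$. The factor $(m+1)_\alpha^{-1}$ cancels against the $(m+1)_\alpha$ in the summand, and the factor $(n+\beta-\alpha+1)_\alpha$ is independent of $m$, so it pulls out of the sum. What remains is
$$\sum_{m=0}^{\infty}k_{\alpha,\beta}(m,n)t^m(m+1)_\alpha=(n+\beta-\alpha+1)_\alpha\sum_{m=0}^{\infty}\binom{-n-\beta-1}{m}(-t)^m.$$
By Newton's generalized binomial theorem, $\sum_{m=0}^\infty\binom{\gamma}{m}x^m=(1+x)^\gamma$ for $|x|<1$, applied with $\gamma=-n-\beta-1$ and $x=-t$, the inner sum equals $(1-t)^{-n-\beta-1}$, which gives the claimed identity. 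One should note that $t\in(0,1)$ guarantees $|x|=t<1$, so the series converges.

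For the second identity I proceed symmetrically, using the second formula of Lemma~\ref{l-1}: $k_{\alpha,\beta}(m,n)=(-1)^n\binom{-m-\beta-1}{n}(n+1)_{\beta-\alpha}^{-1}(m+\alpha+1)_{\beta-\alpha}$. Again the $(n+1)_{\beta-\alpha}^{-1}$ cancels the $(n+1)_{\beta-\alpha}$ in the summand, the $m$-dependent factor $(m+\alpha+1)_{\beta-\alpha}$ comes out of the $n$-sum, and Newton's binomial theorem with exponent $-m-\beta-1$ and variable $-(1-t)$ yields $\sum_{n=0}^\infty\binom{-m-\beta-1}{n}(-(1-t))^n=(1-(1-t))^{-m-\beta-1}=t^{-m-\beta-1}$, valid since $1-t\in(0,1)$. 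Multiplying back the extracted factor gives the second identity.

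The steps here are essentially routine once Lemma~\ref{l-1} is in hand; the only point requiring a word of care is convergence and the legitimacy of applying the generalized binomial series, which is fine because the variables $t$ and $1-t$ both lie strictly in $(0,1)$. If one wanted to be fully rigorous about the interchange implicit in writing these as pointwise identities, one could observe that all terms share a fixed sign pattern after factoring out the $(-1)^m$ (resp.\ $(-1)^n$), so absolute convergence of $\sum_m\binom{-n-\beta-1}{m}(-t)^m$ (which is $\sum_m\binom{n+\beta+m}{m}t^m=(1-t)^{-n-\beta-1}$, a series of positive terms) justifies everything. I do not anticipate any real obstacle; the content of the lemma is entirely carried by the combinatorial rewriting in Lemma~\ref{l-1}.
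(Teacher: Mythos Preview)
Your proof is correct and follows essentially the same route as the paper's: both arguments substitute the two expressions for $k_{\alpha,\beta}(m,n)$ from Lemma~\ref{l-1}, cancel the Pochhammer factors, pull out the constant in the summation variable, and identify the remaining series as a Newton binomial expansion. Your added remarks on convergence are a nice touch but do not constitute a different method.
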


\begin{proof}
By Lemma \ref{l-1}, we have
\begin{eqnarray}
\lefteqn{\sum_{m=0}^{\infty}k_{\alpha, \beta}(m,n)t^m(m+1)_{\alpha}}\nonumber \\ 
&&=\sum_{m=0}^{\infty}(-1)^m t^m \binom{-n-\beta-1}{m}(n+\beta-\alpha+1)_{\alpha}\nonumber \\
&&=(1-t)^{-n-\beta-1}(n+\beta-\alpha+1)_{\alpha}\nonumber,
\end{eqnarray}
and 
\begin{eqnarray}
\lefteqn{\sum_{n=0}^{\infty}k_{\alpha, \beta}(m,n)(1-t)^n(n+1)_{\beta-\alpha}}\nonumber \\
&&=\sum_{n=0}^{\infty}(-1)^n (1-t)^n \binom{-m-\beta-1}{n}(m+\alpha+1)_{\beta-\alpha}  \nonumber \\
&&=t^{-m-\beta-1}t^m(m+\alpha+1)_{\beta-\alpha}. \nonumber
\end{eqnarray}
The lemma is proved.
\end{proof}

\begin{lemma}\cite[Lemma 1.4]{Ath}\label{l-5} It holds that $$(1-t)(1-te^{-x})^{-1}\geq e^{-t(1-t)^{-1}x},$$ 
for all $x\geq 0$ and $t\in[0,1)$. \end{lemma}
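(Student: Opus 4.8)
The plan is to reduce the claimed inequality to two completely elementary one-variable estimates. First, observe that for $x\geq 0$ and $t\in[0,1)$ one has $1-te^{-x}\geq 1-t>0$, so both sides of the asserted inequality are strictly positive and taking reciprocals is legitimate; the inequality $(1-t)(1-te^{-x})^{-1}\geq e^{-t(1-t)^{-1}x}$ is therefore equivalent to
$$\frac{1-te^{-x}}{1-t}\leq e^{\frac{t}{1-t}x}.$$

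Next I would rewrite the left-hand side in a more transparent form. Writing $1-te^{-x}=(1-t)+t(1-e^{-x})$ gives
$$\frac{1-te^{-x}}{1-t}=1+\frac{t}{1-t}\big(1-e^{-x}\big),$$
so, putting $c:=\frac{t}{1-t}\geq 0$, the task is reduced to showing $1+c\big(1-e^{-x}\big)\leq e^{cx}$ for all $x\geq 0$.

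Finally I would invoke the standard inequalities $1-e^{-x}\leq x$ (valid for $x\geq 0$, proved at once by checking that $x-1+e^{-x}$ vanishes at $0$ and has nonnegative derivative) and $1+y\leq e^{y}$ (valid for all real $y$), the latter applied with $y=cx\geq 0$. Chaining them yields
$$1+c\big(1-e^{-x}\big)\leq 1+cx\leq e^{cx},$$
which is exactly the required bound, and the lemma follows. I do not anticipate a genuine obstacle here: the only point needing a moment's attention is the positivity of $1-te^{-x}$ that makes the reciprocal step valid, and that is immediate from $t<1$ and $e^{-x}\leq 1$.
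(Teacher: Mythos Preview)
Your argument is correct: the reduction to $1+c(1-e^{-x})\leq e^{cx}$ with $c=t/(1-t)\geq 0$, followed by the chain $1-e^{-x}\leq x$ and $1+y\leq e^{y}$, is clean and airtight, and the positivity check for $1-te^{-x}$ justifying the reciprocal step is handled properly.

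As for comparison with the paper: the paper does not supply its own proof of this lemma at all---it is quoted verbatim as \cite[Lemma~1.4]{Ath} and left without argument. So there is no in-paper proof to compare against; your elementary derivation simply fills in what the authors outsourced to the reference.
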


\begin{lemma}\cite[Lemma 1.5]{Ath}\label{l-4} Let $y,z>0$, then $$y^{-z}=\frac{1}{\Gamma(z)}\int_{0}^{\infty}e^{-yx}x^{z-1}dx.$$ \end{lemma}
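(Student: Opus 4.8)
The plan is to derive this identity directly from the standard integral definition of the Gamma function by a single linear change of variables. Recall that for $z>0$ the Gamma function admits the Euler integral representation
$$\Gamma(z)=\int_{0}^{\infty}e^{-u}u^{z-1}du,$$
where the integral converges precisely because $z>0$ (the singularity $u^{z-1}$ at the origin is integrable when $z>0$, and the exponential decay controls the tail). This is the only input I would use, and I would treat it as the definition of $\Gamma$.

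First I would fix $y,z>0$ and consider the integral on the right-hand side, namely $\int_{0}^{\infty}e^{-yx}x^{z-1}dx$. Since $y>0$, the map $x\mapsto u=yx$ is a smooth increasing bijection of $(0,\infty)$ onto itself, with inverse $x=u/y$ and differential $dx=y^{-1}du$. Substituting gives
$$\int_{0}^{\infty}e^{-yx}x^{z-1}dx=\int_{0}^{\infty}e^{-u}\Big(\frac{u}{y}\Big)^{z-1}\frac{du}{y}=y^{-(z-1)}y^{-1}\int_{0}^{\infty}e^{-u}u^{z-1}du=y^{-z}\,\Gamma(z).$$
Dividing both sides by $\Gamma(z)$, which is positive and finite for $z>0$, yields exactly
$$y^{-z}=\frac{1}{\Gamma(z)}\int_{0}^{\infty}e^{-yx}x^{z-1}dx,$$
as desired.

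There is essentially no hard step here: the identity is the classical scaling property of the Gamma integral, and the only points deserving a remark are that $y>0$ legitimizes the substitution $u=yx$ (so that the limits of integration are preserved and the Jacobian $y^{-1}$ is well defined) and that $z>0$ guarantees convergence of the integral defining $\Gamma(z)$, so that dividing by $\Gamma(z)$ is permissible. I would present the computation in one displayed chain as above, with a one-line justification of the change of variables, and conclude immediately.
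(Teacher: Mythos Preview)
Your proof is correct: the identity is precisely the scaling property of the Euler integral for $\Gamma(z)$, obtained by the substitution $u=yx$. The paper does not supply a proof of this lemma at all---it simply quotes it as \cite[Lemma~1.5]{Ath}---so there is nothing to compare against beyond noting that your argument is the standard one-line derivation and is entirely adequate here.
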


\begin{lemma}\cite[Lemma 1]{Con}\label{l-3} Let $n\geq 0$. Then $$(n+1)_{s}\leq \frac{(n+1)^{s+1}}{n+s+1},$$
for $s\in (-1,0),$
and  $$(n+1)_{s}\leq (n+s+1)^{s},$$
for $s\geq 0$.\end{lemma}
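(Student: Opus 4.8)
The plan is to reduce both inequalities to standard properties of the Gamma function, writing throughout $(n+1)_s=\Gamma(n+s+1)/\Gamma(n+1)$ and letting $\psi=(\log\Gamma)'$ denote the digamma function, which is increasing on $(0,\infty)$ since $\log\Gamma$ is convex there. The one auxiliary fact I would record first is that $\psi(x)<\log x$ for every $x>0$: because $\Gamma(x+1)=x\Gamma(x)$, we have $\log\Gamma(x+1)-\log\Gamma(x)=\log x$, and by the mean value theorem this difference equals $\psi(\theta)$ for some $\theta\in(x,x+1)$, so $\psi(\theta)=\log x$; monotonicity of $\psi$ then gives $\psi(x)<\psi(\theta)=\log x$.

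For the range $s\geq 0$ I would argue by monotonicity in $s$. Setting
$$h(s)=\log\Gamma(n+s+1)-\log\Gamma(n+1)-s\log(n+s+1),$$
one has $h(0)=0$ and
$$h'(s)=\big(\psi(n+s+1)-\log(n+s+1)\big)-\frac{s}{n+s+1}\leq 0,$$
where the first parenthesis is negative by the auxiliary bound and the last term is nonnegative since $s,n\geq 0$. Hence $h(s)\leq 0$ for all $s\geq 0$, which is exactly $(n+1)_s\leq(n+s+1)^s$.

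For $s\in(-1,0)$ I would first reformulate the target. Multiplying the claimed inequality $(n+1)_s\leq(n+1)^{s+1}/(n+s+1)$ through by $n+s+1>0$ and using $(n+s+1)\Gamma(n+s+1)=\Gamma(n+s+2)$, it becomes equivalent to $\Gamma(n+u+1)\leq(n+1)^u\Gamma(n+1)$ with $u:=s+1\in(0,1)$. This last inequality I would obtain directly from log-convexity of $\Gamma$: since $n+u+1=(1-u)(n+1)+u(n+2)$,
$$\Gamma(n+u+1)\leq\Gamma(n+1)^{1-u}\Gamma(n+2)^{u}=\Gamma(n+1)(n+1)^{u},$$
the final equality coming from $\Gamma(n+2)=(n+1)\Gamma(n+1)$.

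The only non-algebraic ingredient is the estimate $\psi(x)<\log x$, so the main point to get right is its short mean value theorem justification above; the remaining work is the correct reformulation of the $s\in(-1,0)$ case into a convex-combination statement and the routine check that the log-convexity endpoints collapse to $(n+1)^{u}\Gamma(n+1)$. Note that everything is valid for real $n\geq 0$, since no step uses integrality of $n$.
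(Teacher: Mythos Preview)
Your proof is correct. Both cases are handled cleanly: the $s\ge 0$ case via the monotonicity argument with the digamma bound $\psi(x)<\log x$, and the $s\in(-1,0)$ case by the substitution $u=s+1$ together with log-convexity of $\Gamma$ at the endpoints $n+1$ and $n+2$. I checked the derivative computation $h'(s)=\psi(n+s+1)-\log(n+s+1)-s/(n+s+1)$ and the reformulation $(n+s+1)\Gamma(n+s+1)=\Gamma(n+s+2)$; both are fine, and all Gamma arguments stay positive under the hypotheses $n\ge 0$, $s>-1$.

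As for comparison with the paper: there is nothing to compare. The paper does not prove this lemma at all---it simply quotes it from Constantine \cite[Lemma~1]{Con} and moves on. Your argument is therefore a self-contained addition rather than an alternative route. It is worth noting that your approach is quite economical: the only analytic input is the convexity of $\log\Gamma$ (which simultaneously yields the monotonicity of $\psi$, the mean-value bound $\psi(x)<\log x$, and the interpolation inequality used for $s\in(-1,0)$), and as you observe, nothing requires $n$ to be an integer.
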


\begin{lemma}\label{l-6}
Let $p\geq 1, \beta>-1$. Let $\rho\in (0,1)$ and $\mathbf{N}$ be a natural number. For $\varepsilon\in (0, \frac{1}{p}(\beta+1))$, we define $$a_n(\varepsilon)=(n+\beta+1)^{-1-p\varepsilon},\, n\in \mathbb{N}_0.$$ 
Then we can take a constant $\varepsilon_1=\varepsilon_1(\rho, \mathbf{N})>0$ such that
$$\sum_{n=\mathbf{N}+1}^{\infty}a_n(\varepsilon)\geq (1-\rho)\sum_{n=0}^{\infty}a_n(\varepsilon),$$
for all $\varepsilon\in (0, \varepsilon_1]$. 
\end{lemma}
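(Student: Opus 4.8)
The plan is to rewrite the claimed inequality in an equivalent form and reduce it to a limiting statement. Since $\varepsilon>0$ gives $1+p\varepsilon>1$, the series $\sum_{n=0}^{\infty}a_n(\varepsilon)$ converges, so
$$\sum_{n=\mathbf{N}+1}^{\infty}a_n(\varepsilon)=\sum_{n=0}^{\infty}a_n(\varepsilon)-\sum_{n=0}^{\mathbf{N}}a_n(\varepsilon),$$
and the desired estimate is equivalent to $\sum_{n=0}^{\mathbf{N}}a_n(\varepsilon)\le\rho\sum_{n=0}^{\infty}a_n(\varepsilon)$. The strategy is then to show that, as $\varepsilon\to 0^{+}$, the finite sum on the left stays bounded while the full series on the right tends to $+\infty$; hence their ratio can be forced below $\rho$ by taking $\varepsilon$ small, and $\varepsilon_1$ will depend only on $\rho$ and $\mathbf{N}$ (and the fixed data $\beta,p$).

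First I would control the finite sum uniformly in $\varepsilon$. Fix $\varepsilon_0:=\tfrac{1}{2p}(\beta+1)\in(0,\tfrac1p(\beta+1))$. For each $n\in\{0,1,\dots,\mathbf{N}\}$ we have $n+\beta+1>0$ because $\beta>-1$, so $\varepsilon\mapsto a_n(\varepsilon)=(n+\beta+1)^{-1-p\varepsilon}$ is continuous and positive on the compact interval $[0,\varepsilon_0]$. Consequently there is a constant $M=M(\mathbf{N},\beta,p)<\infty$ with $\sum_{n=0}^{\mathbf{N}}a_n(\varepsilon)\le M$ for all $\varepsilon\in(0,\varepsilon_0]$.

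Next I would bound the full series below by an integral comparison. The function $x\mapsto (x+\beta+1)^{-1-p\varepsilon}$ is positive and decreasing on $[0,\infty)$ (again using $\beta>-1$), so
$$\sum_{n=0}^{\infty}a_n(\varepsilon)\ge\int_{0}^{\infty}(x+\beta+1)^{-1-p\varepsilon}\,dx=\frac{(\beta+1)^{-p\varepsilon}}{p\varepsilon}\ge\frac{c}{p\varepsilon},$$
where $c:=\min_{0\le\varepsilon\le\varepsilon_0}(\beta+1)^{-p\varepsilon}>0$ (the minimum of a continuous positive function on a compact set). This lower bound tends to $+\infty$ as $\varepsilon\to0^{+}$.

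Finally, set $\varepsilon_1:=\min\{\varepsilon_0,\tfrac{c\rho}{pM}\}>0$, which depends only on $\rho$ and $\mathbf{N}$ (through $M$ and $c$). Then for every $\varepsilon\in(0,\varepsilon_1]$,
$$\sum_{n=0}^{\infty}a_n(\varepsilon)\ge\frac{c}{p\varepsilon}\ge\frac{M}{\rho}\ge\frac1\rho\sum_{n=0}^{\mathbf{N}}a_n(\varepsilon),$$
which is exactly $\sum_{n=0}^{\mathbf{N}}a_n(\varepsilon)\le\rho\sum_{n=0}^{\infty}a_n(\varepsilon)$, and rearranging gives the assertion. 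I do not expect a genuine obstacle here; the only points needing a little care are invoking $\beta>-1$ to guarantee positivity and monotonicity of the relevant functions on $[0,\infty)$ (so the integral test applies and $c>0$), the harmless use of $\varepsilon_0<\tfrac1p(\beta+1)$ so that $\varepsilon_1$ lies in the admissible range, and checking that the final choice of $\varepsilon_1$ is manifestly a function of $\rho$ and $\mathbf{N}$ alone.
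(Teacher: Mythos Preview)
Your argument is correct and follows essentially the same route as the paper: both reduce the claim to showing $\sum_{n=0}^{\mathbf{N}}a_n(\varepsilon)\le\rho\sum_{n=0}^{\infty}a_n(\varepsilon)$, bound the full series below by the integral $\int_{0}^{\infty}(x+\beta+1)^{-1-p\varepsilon}\,dx=\dfrac{(\beta+1)^{-p\varepsilon}}{p\varepsilon}$, bound the finite partial sum above uniformly in $\varepsilon$, and then choose $\varepsilon_1$ so that the ratio is at most $\rho$. The only cosmetic difference is that the paper bounds the partial sum explicitly by $(\mathbf{N}+1)(\beta+1)^{-1-p\varepsilon}$, while you appeal to continuity on $[0,\varepsilon_0]$.
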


\begin{proof}
We let $\mathbf{S}=\sum_{n=0}^{\infty}a_n(\varepsilon)$. It is easy to see that
\begin{equation}
\mathbf{S}\geq \int_{0}^{\infty}(x+\beta+1)^{-1-p\varepsilon}dx=\frac{1}{\varepsilon} \frac{1}{p(\beta+1)^{p\varepsilon}}, \nonumber\end{equation}
%\begin{equation}
%\mathbf{S}\leq \frac{1}{(\beta+1)^{1+p\varepsilon}}+\int_{0}^{\infty}(x+\beta+1)^{-1-p\varepsilon}=\frac{1}{\varepsilon}\Big[\frac{1}{p(\beta+1)^{p\varepsilon}}+\frac{\varepsilon}{(\beta+1)^{1+p\varepsilon}}\Big], \nonumber
%\end{equation}
and \begin{equation}
\sum_{n=0}^{\bf{N}}a_n(\varepsilon)\leq \frac{{\bf{N}}+1}{(\beta+1)^{1+p\varepsilon}}. \nonumber
\end{equation}
This means that we can find two constants $C_i=C_i(p,\beta)>0, i=1,2$, such that
$$\frac{1}{C_1\varepsilon}\leq \mathbf{S},\,\,\,{\text{and}}\,\,\,\, \sum_{n=0}^{\bf{N}}a_n(\varepsilon)\leq C_2{\bf{N}}.$$
It follows that
\begin{equation} 
\Big(\sum_{n={\bf{N}}+1}^{\infty}a_n(\varepsilon)\Big)/\Big(\sum_{n=0}^{\infty}a_n(\varepsilon)\Big)=1-\Big(\sum_{n=0}^{\bf{N}}a_n(\varepsilon)\Big)/\mathbf{S}\geq 1-C_1 C_2{\bf{N}}\varepsilon. \nonumber 
\end{equation}
Hence, if $0<\varepsilon\leq \varepsilon_1:=\frac{\rho}{C_1C_2{\bf{N}}}$, then we have
$$\sum_{n=\mathbf{N}+1}^{\infty}a_n(\varepsilon)\geq (1-\rho)\sum_{n=0}^{\infty}a_n(\varepsilon),$$
for all $\varepsilon\in (0,\varepsilon_1]$. This proves the lemma. 
\end{proof}

\section{{\bf Proof of Theorem \ref{main}}}
First we prove the following
\begin{proposition}\label{i}
If $\mathcal{C}_{\mu}(\beta,p)<\infty$, then $H_{\mu}^{\alpha, \beta}: l_{w_1}^{p}\rightarrow l_{w_2}^p$ is bounded and $\|H_{\mu}^{\alpha, \beta}\|\leq \mathcal{C}_{\mu}(\beta,p)$.  
\end{proposition}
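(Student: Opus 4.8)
The plan is to prove the operator norm bound by a Schur-test argument applied to the kernel
$$
K_{\alpha,\beta}(m,n) = \int_{(0,1)} k_{\alpha,\beta}(m,n)\, t^m (1-t)^n \, d\mu(t),
$$
but carried out with respect to the weights $w_1, w_2$. The point is that after absorbing the weights, the relevant test functions should be powers, and the Gamma-function identities of Lemma \ref{l-2} will evaluate the resulting sums in closed form. Concretely, I would write the action of $H_\mu^{\alpha,\beta}$ on a sequence $a \in l_{w_1}^p$, apply Hölder's inequality in the $m$-variable with the measure $d\mu(t)$ and a suitable splitting of the kernel into a ``$p$-part'' and a ``$q$-part'', and then sum in $n$ using the second identity of Lemma \ref{l-2}.

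\medskip

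The key steps, in order, are as follows. First, I would fix the natural ``Schur weights'': using Lemma \ref{l-1} to rewrite $k_{\alpha,\beta}(m,n)$ and the definitions
$w_1(m) = (m+1)_\alpha^{-p}(m+1)_\beta$, $w_2(m) = (m+\beta-\alpha+1)_\alpha^{-p}(m+1)_\beta$,
I expect the problem to reduce, after the substitution $b_m = a_m \cdot (\text{weight factor})$, to an unweighted-looking estimate whose kernel is $\binom{-n-\beta-1}{m}(-1)^m$ times pure Gamma ratios, for which Lemma \ref{l-2} gives
$\sum_m k_{\alpha,\beta}(m,n)\, t^m (m+1)_\alpha = (1-t)^{-n-\beta-1}(n+\beta-\alpha+1)_\alpha$
and
$\sum_n k_{\alpha,\beta}(m,n)(1-t)^n (n+1)_{\beta-\alpha} = t^{-m-\beta-1}(m+\alpha+1)_{\beta-\alpha}$.
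Second, for each fixed $n$ I would estimate $|H_\mu^{\alpha,\beta}(a)(n)|$ by Minkowski's integral inequality in $d\mu(t)$ to pull the $d\mu$ integral outside, then apply Hölder in $m$ against the ``probability-like'' distribution $k_{\alpha,\beta}(m,n) t^m (m+1)_\alpha / \big[(1-t)^{-n-\beta-1}(n+\beta-\alpha+1)_\alpha\big]$, choosing the exponents so that one factor produces $|a_m|^p w_1(m)$ and the other collapses via the first identity. Third, I would raise to the $p$-th power, multiply by $w_2(n)$, sum over $n$, interchange the $n$-sum with the $m$-sum and the $d\mu$-integral (Tonelli), and invoke the second identity of Lemma \ref{l-2} to carry out the $n$-sum; the net effect should be that the constant emerging is precisely
$\Big(\int_{(0,1)} (1-t)^{-(1-1/p)(\beta+1)} t^{-(\beta+1)/p} d\mu(t)\Big)^p = \mathcal{C}_\mu(\beta,p)^p$,
with the weight factors matching up exactly because of how $w_1, w_2$ were chosen. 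The case $p=1$ is the degenerate Hölder case and should be handled separately (and more easily) by Tonelli directly.

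\medskip

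The main obstacle I anticipate is bookkeeping: getting the powers of $t$, $1-t$, $(m+1)_\alpha$, $(n+\beta-\alpha+1)_\alpha$, $(m+1)_\beta$, $(m+\alpha+1)_{\beta-\alpha}$, $(n+1)_{\beta-\alpha}$ to cancel cleanly so that exactly $\mathcal{C}_\mu(\beta,p)^p$ survives. In particular one must be careful that the two representations of $k_{\alpha,\beta}$ in Lemma \ref{l-1} are used consistently — one for the Hölder step in $m$ and the other for the summation step in $n$ — and that the identity $(1-t)^{-n-\beta-1}$ appearing after the $m$-sum is raised to the correct power ($1$ from the Hölder collapse, times whatever comes from the $q$-exponent) so that when combined with the $t^{-m-\beta-1}$ from the $n$-sum the exponents split as $(1-1/p)(\beta+1)$ in $1-t$ and $(\beta+1)/p$ in $t$. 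A secondary technical point is justifying the interchange of summations and the $d\mu$-integral; since all terms are nonnegative once absolute values are taken, Tonelli's theorem applies, and convergence is guaranteed a posteriori by the finiteness of $\mathcal{C}_\mu(\beta,p)$.
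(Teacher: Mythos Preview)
Your proposal is correct and follows essentially the same route as the paper: the paper defines $E_n(t)=\sum_m k_{\alpha,\beta}(m,n)t^m(1-t)^n|a_m|$, applies H\"older in $m$ with Schur weight $(m+1)_\alpha$ (precisely your ``probability-like'' distribution, collapsing via the first identity of Lemma~\ref{l-2}), then sums in $n$ using the second identity of Lemma~\ref{l-2}, and finally uses Minkowski in $d\mu$ to assemble $\mathcal{C}_\mu(\beta,p)$. One minor ordering correction: in your step~3 you cannot ``interchange the $n$-sum with the $d\mu$-integral by Tonelli'' after raising $\int E_n(t)\,d\mu(t)$ to the $p$-th power; that is exactly where Minkowski (not Tonelli) is needed, and the paper applies it there, not per fixed $n$ as your step~2 suggests.
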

We then prove 
\begin{proposition}\label{ii} 
If $H_{\mu}^{\alpha, \beta}: l_{w_1}^{p}\rightarrow l_{w_2}^p$ is bounded, then $\|H_{\mu}^{\alpha, \beta}\|\geq \mathcal{C}_{\mu}(\beta,p)$.\end{proposition}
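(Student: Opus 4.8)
The plan is to test the boundedness of $H_{\mu}^{\alpha,\beta}$ against a one-parameter family of near-optimal sequences and let the parameter degenerate, a standard device for extracting sharp lower bounds on norms of such integral/matrix operators. Concretely, for small $\varepsilon>0$ I would consider $a=a^{(\varepsilon)}$ defined (after accounting for the weight $w_1$) so that $|a_m|^p w_1(m)$ behaves like $(m+\beta+1)^{-1-p\varepsilon}$; in view of $w_1(m)=(m+1)_{\alpha}^{-p}(m+1)_{\beta}$ this means roughly $a_m = (m+1)_{\alpha}\,(m+1)_{\beta}^{-1/p}\,(m+\beta+1)^{-1/p-\varepsilon}$, which by Lemma \ref{l-3} has $\|a\|_{p,w_1}^p$ comparable to $\sum_m (m+\beta+1)^{-1-p\varepsilon}\asymp 1/\varepsilon$ as $\varepsilon\to0^+$. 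The point of Lemma \ref{l-6} is precisely that this mass escapes to infinity: for any fixed truncation level $\mathbf{N}$ the tail $\sum_{m>\mathbf{N}}$ still captures a $(1-\rho)$-fraction of the whole sum once $\varepsilon$ is small enough.

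Next I would compute, or rather lower-bound, $\|H_{\mu}^{\alpha,\beta}(a^{(\varepsilon)})\|_{p,w_2}^p$. Using the first identity in Lemma \ref{l-2}, $\sum_m k_{\alpha,\beta}(m,n)t^m (m+1)_{\alpha} = (1-t)^{-n-\beta-1}(n+\beta-\alpha+1)_{\alpha}$, together with the definition of $w_2(n)=(n+\beta-\alpha+1)_{\alpha}^{-p}(n+1)_{\beta}$, the $\alpha$-dependent factors cancel cleanly; heuristically $H_{\mu}^{\alpha,\beta}(a^{(\varepsilon)})(n)$ is of the order $(n+\beta-\alpha+1)_{\alpha}\int_{(0,1)}(1-t)^{-n-\beta-1}\,\widetilde a(t)\,d\mu(t)$ where $\widetilde a(t)=\sum_m t^m (m+1)_{\beta}^{-1/p}(m+\beta+1)^{-1/p-\varepsilon}$. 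To control $\widetilde a(t)$ from below near $t\to1^-$ I would use Lemmas \ref{l-5} and \ref{l-4}: write $(m+\beta+1)^{-1/p-\varepsilon}$ via the Gamma integral (Lemma \ref{l-4}), interchange sums, and use the convexity bound $(1-t)(1-te^{-x})^{-1}\ge e^{-t(1-t)^{-1}x}$ (Lemma \ref{l-5}) to replace the geometric sum by an exponential, ending with an estimate of the form $\widetilde a(t)\gtrsim (1-t)^{-1/q+\varepsilon'}$ (up to constants tending to the right value). Summing over $n$ — again recognizing a sum of the type handled in Lemma \ref{l-6} — one gets $\|H_{\mu}^{\alpha,\beta}(a^{(\varepsilon)})\|_{p,w_2}^p \gtrsim \big(\int_{(0,1)}(1-t)^{-(1-1/p)(\beta+1)}t^{-(1/p)(\beta+1)}\,d\mu(t)\big)^p \cdot \tfrac{1}{\varepsilon}\cdot(1+o(1))$, with the dependence on $\varepsilon$ matching that of $\|a^{(\varepsilon)}\|_{p,w_1}^p$.

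Dividing, letting $\varepsilon\to0^+$, and then letting the auxiliary parameters $\rho\to0$, $\mathbf{N}\to\infty$ used to localize the measure near the boundary disappear, yields $\|H_{\mu}^{\alpha,\beta}\|\ge \mathcal{C}_{\mu}(\beta,p)$. One subtlety to be careful about: a priori we do not know $\mathcal{C}_{\mu}(\beta,p)<\infty$ in this direction, so the argument should really be run with $\mu$ replaced by its restriction to a compact subinterval $[\delta,1-\delta]\subset(0,1)$ (on which all the exponents are harmless and all integrals finite), derive the bound with $\mathcal{C}_{\mu}$ replaced by the corresponding truncated quantity, and then let $\delta\to0$ using monotone convergence; boundedness of $H_{\mu}^{\alpha,\beta}$ on the full measure dominates that on each restriction.

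I expect the main obstacle to be the lower estimate of the ``amplified'' function $\widetilde a(t)$ as $t\to1^-$ with the correct constant: one must show the constant produced by the Gamma-integral/Lemma \ref{l-5} manipulation converges to exactly the factor that makes the exponent of $(1-t)$ come out to $-(1-1/p)(\beta+1)$ after pairing with $d\mu$, rather than merely some comparable power. Matching the exponent $t^{-(1/p)(\beta+1)}$ on the $t$-near-$0$ side is comparatively easy since there $\widetilde a(t)$ and the kernel are both bounded and one just needs a crude lower bound; the delicate endpoint is $t\to1$, and keeping track of the $\varepsilon$-dependent corrections so they are $1+o(1)$ rather than a fixed loss is where the care of Lemma \ref{l-6} is essential.
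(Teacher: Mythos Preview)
Your proposal is correct and follows essentially the same route as the paper: the same test sequence $a^{(\varepsilon)}$, the Gamma-integral representation (Lemma \ref{l-4}) combined with the summation identity (Lemma \ref{l-2}) and the convexity bound (Lemma \ref{l-5}) to lower-bound $E_n(t)$, then Lemma \ref{l-6} and the truncation of $\mu$ to $[\delta,1-\delta]$ before letting $\varepsilon,\delta,\rho\to 0$. One cosmetic caveat: your heuristic factorization via $\widetilde a(t)$ is not how the computation actually runs---after the Lemma \ref{l-4}/\ref{l-2}/\ref{l-5} manipulation the lower bound for $E_n(t)$ still carries an $n$-dependent bracket of the form $\bigl[(c+nt)/(1-t)\bigr]^{-b}$ (the paper's $F_n(t)$, $G_n(t)$), and the key step (Claim \ref{cla}) is the comparison of \emph{that} with $[w_1(n)]^{1/p}a_n$ for large $n$ and $t\in[\eta,1-\eta]$, not an estimate of a function of $t$ alone.
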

It is easy to see that Proposition \ref{i} and \ref{ii} imply Theorem \ref{main}. 

\subsection{Proof of Proposition \ref{i}}
%Take $N_1=[\frac{\tau}{n+\eta}]+1$, then we see that
 %$$\int_{\frac{\tau}{n+\beta}}^{1-\frac{\tau}{n+\beta}}(1-t)^{-(\beta+1)(1-\frac{1}{p})}t^{-\frac{1}{p}(\beta+1)}\geq \int_{\eta}^{1-\eta}(1-t)^{-(\beta+1)(1-\frac{1}{p})}t^{-\frac{1}{p}(\beta+1)}\geq (1-\rho)C(\mu, p),$$
%when $n\geq N_1$,
 
%Take $N_2=[\frac{(\beta+1)\tau}{n+\eta}]+1$, then we see that
% $$\int_{\frac{(\beta+1)\tau}{n+\beta}}^{1-\frac{(\beta)\tau}{n+\beta}}(1-t)^{-(\beta+1)(1-\frac{1}{p})}t^{-\frac{1}{p}(\beta+1)}\geq \int_{\eta}^{1-\eta}(1-t)^{-(\beta+1)(1-\frac{1}{p})}t^{-\frac{1}{p}(\beta+1)}\geq (1-\rho)C(\mu, p),$$
%when $n\geq N_2$,

For $n\in \mathbb{N}_0$, $a=\{a_m\}_{m=0}^{\infty}\in l_{w_1}^{p}$, let 
$$E_n(t):=\sum_{m=0}^{\infty}k_{\alpha, \beta}(m,n)t^m (1-t)^n |a_m| $$

First, we show that $E_n(t)$ is a well-defined function on $(0,1)$ for each $n\in \mathbb{N}_0$.
When $p=1$, for $a\in l_{w_1}^{1}$, we have
\begin{equation}
E_n(t)=\sum_{m=0}^{\infty}k_{\alpha, \beta}(m,n)t^m (1-t)^n[w_{1}(m)]^{-1}\cdot|a_m|w_{1}(m). \nonumber
\end{equation}
Moreover, we have, for each $t\in (0,1)$,
\begin{eqnarray}
 k_{\alpha, \beta}(m,n)t^m (1-t)^n[w_{1}(m)]^{-1}&=&k_{\alpha, \beta}(m,n)t^m (1-t)^n (m+1)_{\alpha}(m+1)_{\beta}^{-1} \nonumber \\
 &=& \frac{\Gamma(n+m+\beta+1)}{\Gamma(m+\beta+1)\Gamma(n+\beta-\alpha+1)}t^m (1-t)^n. \nonumber
\end{eqnarray}
By Stirling's formula, we know that
\begin{equation}
\frac{\Gamma(n+m+\beta+1)}{\Gamma(m+\beta+1)}=(m+\beta+1)^{n}[1+o(1)],\,\,\, {\text{as}}\,\,\, m\rightarrow \infty. \nonumber 
\end{equation} 
Here and later, $o(1)$ denotes some sequence $\{{\bf a}_i\}_{i=0}^{\infty}$ with $o(1)={\bf a}_i\rightarrow 0$ as $i \rightarrow \infty$, which will be different in different places. 
Hence, for $t\in (0,1)$,
 $$\frac{\Gamma(n+m+\beta+1)}{\Gamma(m+\beta+1)}t^m \rightarrow 0,\,\,\,{\text{as}}\,\,\, m\rightarrow \infty.$$
Then, for each $t\in (0,1)$, there is a constant $M>0$ such that
\begin{equation}
E_n(t)\leq M\frac{(1-t)^n}{\Gamma(n+\beta-\alpha+1)}\sum_{m=0}^{\infty}|a_m|w_{1}(m). \nonumber
\end{equation}
This means that $E_{n}(t)$ is well-defined on $(0,1)$ for each $n\in \mathbb{N}_0$ when $p=1$.

When $p>1$, by H\"older's inequality, we have 
\begin{eqnarray}\label{w-1}
E_n(t)&=& \sum_{m=0}^{\infty}\Big[k_{\alpha, \beta}(m,n)t^m (1-t)^n[w_{1}(m)]^{-\frac{1}{p}}\Big]\Big[a_m [w_{1}(m)]^{\frac{1}{p}}\Big]  \nonumber \\
&\leq & \Big\{\sum_{m=0}^{\infty}\Big[k_{\alpha, \beta}(m,n)t^m (1-t)^n[w_{1}(m)]^{-\frac{1}{p}}\Big]^{q}\Big\}^{\frac{1}{q}}\|a\|_{p,w_1}\nonumber \\
&=& \Big\{\sum_{m=0}^{\infty} \Big[\frac{\Gamma(n+m+\beta+1)}{\Gamma(m+\beta+1)}\Big]^q \frac{\Gamma(m+\beta+1)}{\Gamma(m+1)} t^{qm}\Big\}^{\frac{1}{q}}\frac{(1-t)^{n}}{\Gamma(n+\beta-\alpha+1)}\|a\|_{p,w_1}.
\end{eqnarray}
We note that, using Stirling's formula again, we have for $m\geq 1$,
\begin{eqnarray}\lefteqn{\Big\{\Big[\frac{\Gamma(n+m+\beta+1)}{\Gamma(m+\beta+1)}\Big]^q \frac{\Gamma(m+\beta+1)}{\Gamma(m+1)} t^{qm}\Big\}^{\frac{1}{m}}}\nonumber \\
&&=t^q\Big[\frac{\Gamma(n+m+\beta+1)}{\Gamma(m+\beta+1)}\Big]^{\frac{q}{m}}\Big[\frac{\Gamma(m+\beta+1)}{\Gamma(m+1)}\Big]^{\frac{1}{m}}\nonumber \\
&&= t^q (m+\beta)^{\frac{nq}{m}}[1+o(1)]^{\frac{q}{m}}m^{\frac{\beta}{m}}[1+o(1)]^{\frac{1}{m}}\nonumber \\
&&\rightarrow t^q<1\,\,\, {\text{as}}\,\,\, m\rightarrow \infty.\nonumber
\end{eqnarray}
From the root test for series, we know that, for $t\in(0,1)$, the series 
\begin{equation} \sum_{m=0}^{\infty} \Big[\frac{\Gamma(n+m+\beta+1)}{\Gamma(m+\beta+1)}\Big]^q \frac{\Gamma(m+\beta+1)}{\Gamma(m+1)} t^{qm} \nonumber 
\end{equation}
is convergent. It then follows from (\ref{w-1}) that $E_{n}(t)$ is well-defined on $(0,1)$ for each $n\in \mathbb{N}_0$ when $p>1$.

We shall give an estimate for $E_n^p(t), p\geq 1$. When $p>1$, we have
\begin{eqnarray}
\lefteqn{E_n(t)=\sum_{m=0}^{\infty}[k_{\alpha, \beta}(m,n)t^m (1-t)^n]^{\frac{1}{p}}|a_m|(m+1)_{\alpha}^{-\frac{1}{q}}} \nonumber \\
&& \qquad\quad\quad \times [k_{\alpha, \beta}(m,n)t^m (1-t)^n]^{\frac{1}{q}}(m+1)_{\alpha}^{\frac{1}{q}} \nonumber \\
&& \leq \Big[\sum_{m=0}^{\infty}k_{\alpha, \beta}(m,n)t^m (1-t)^n|a_m|^p (m+1)_{\alpha}^{1-p} \Big]^{\frac{1}{p}}\nonumber \\
&& \qquad\quad\quad \times \Big[\sum_{m=0}^{\infty}k_{\alpha, \beta}(m,n)t^m (1-t)^n(m+1)_{\alpha}\Big]^{\frac{1}{q}}.\nonumber 
\end{eqnarray}
It follows from Lemma \ref{l-2} that 
\begin{eqnarray}\label{ee-1}
\lefteqn{E_n^p(t)\leq (1-t)^{-(\beta+1)(p-1)}(n+\beta-\alpha+1)_{\alpha}^{p-1}}\nonumber \\
&&\times\Big[\sum_{m=0}^{\infty}k_{\alpha, \beta}(m,n)t^m (1-t)^n|a_m|^p(m+1)_{\alpha}^{1-p}\Big]. 
\end{eqnarray}
When $p=1$, the inequality (\ref{ee-1}) obviously holds. 

Hence, for $p\geq 1$, and $t\in (0,1)$, we have
\begin{eqnarray}
\lefteqn{\sum_{n=0}^{\infty}w_2(n)E_n^p(t)=\sum_{n=0}^{\infty}(n+\beta-\alpha+1)_{\alpha}^{-p} (n+1)_{\beta}E_n^p(t)}\nonumber \\
&& \leq (1-t)^{-(\beta+1)(p-1)}\sum_{n=0}^{\infty}(n+\beta-\alpha+1)_{\alpha}^{-1}(n+1)_{\beta} \nonumber \\
&& \qquad\quad\quad \times\Big[\sum_{m=0}^{\infty}k_{\alpha, \beta}(m,n)t^m (1-t)^n|a_m|^p(m+1)_{\alpha}^{1-p}\Big] \nonumber \\
&&= (1-t)^{-(\beta+1)(p-1)}\sum_{m=0}^{\infty}|a_m|^p(m+1)_{\alpha}^{1-p}\Big[\sum_{n=0}^{\infty}k_{\alpha, \beta}(m,n)t^m (1-t)^n (n+1)_{\beta-\alpha}\Big].\nonumber 
\end{eqnarray}
 By using Lemma \ref{l-2}, we get that
\begin{eqnarray}
 \sum_{n=0}^{\infty}w_2(n)E_n^p(t)&\leq&  (1-t)^{-(\beta+1)(p-1)}t^{-\beta-1}\sum_{m=0}^{\infty}|a_m|^p(m+1)_{\alpha}^{1-p}(m+\alpha+1)_{\beta-\alpha}
 \nonumber \\
 &=&  (1-t)^{-(\beta+1)(p-1)}t^{-\beta-1}\sum_{m=0}^{\infty}|a_m|^p(m+1)_{\alpha}^{-p}(m+1)_{\beta}
 \nonumber \\
 &=& (1-t)^{-(\beta+1)(p-1)}t^{-\beta-1}\sum_{m=0}^{\infty}|a_m|^p w_1(m).  \nonumber\end{eqnarray}

We let 
$$A_n:=\sum_{m=0}^{\infty}\int_{(0,1)}k_{\alpha, \beta}(m,n)t^m (1-t)^n|a_m|d\mu(t).$$
Then we have \begin{eqnarray}
A_n&=& \int_{(0,1)}\sum_{m=0}^{\infty}k_{\alpha, \beta}(m,n)t^m (1-t)^n|a_m|d\mu(t) \nonumber \\
&=&  \int_{(0,1)}E_n(t)d\mu(t), \nonumber
\end{eqnarray}
so that 
\begin{eqnarray}
\|H_{\mu}^{\alpha, \beta}(a)\|_{p, w_2}&=& \Big[\sum_{n=0}^{\infty}w_2(n)|H_{\mu}^{\alpha, \beta}(a)(n)|^{p}\Big]^{\frac{1}{p}}\nonumber  \\
&\leq&  \Big[\sum_{n=0}^{\infty}w_2(n)A_n^{p}\Big]^{\frac{1}{p}}=\Big[\sum_{n=0}^{\infty}w_2(n)\Big(\int_{(0,1)}E_n(t)d\mu(t)\Big)^{p}\Big]^{\frac{1}{p}}. \nonumber
\end{eqnarray}
It follows from Minkowski's inequality that
\begin{eqnarray}
\|H_{\mu}^{\alpha, \beta}(a)\|_{p, w_2}&\leq & \int_{(0,1)}\Big(\sum_{n=0}^{\infty}w_2(n)E_n^p(t)\Big)^{\frac{1}{p}}d\mu(t)\nonumber \\
&\leq & \Big[\int_{(0,1)}(1-t)^{-(1-\frac{1}{p})(\beta+1)}t^{-\frac{1}{p}(\beta+1)}d\mu(t)\Big]\Big[\sum_{m=0}^{\infty}|a_m|^p w_1(m)\Big]^{\frac{1}{p}}\nonumber \\
&=& \mathcal{C}_{\mu}(\beta,p)\|a\|_{p, w_1}. \nonumber 
\end{eqnarray}
This proves that $H_{\mu}^{\alpha, \beta}: l_{w_1}^{p} \rightarrow l_{w_2}^{p}$ is bounded when $\mathcal{C}_{\mu}(\beta,p)<\infty$ and $\|H_{\mu}^{\alpha, \beta}\|\leq \mathcal{C}_{\mu}(\beta,p).$ Proposition \ref{i} is proved.

\subsection{Proof of Proposition \ref{ii}}  
Let $\varepsilon\in (0, \frac{1}{p}(\beta+1))$, we define the sequence $a=\{a_m\}_{m=0}^{\infty}$ as
\begin{equation}\label{am}a_m:=(m+1)_{\alpha}(m+1)_{\beta}^{-\frac{1}{p}}(m+\beta+1)^{-(\frac{1}{p}+\varepsilon)}, \,m\in \mathbb{N}_0.\end{equation}
Then 
$$\sum_{m=0}^{\infty}w_1(m)a_m^p=\sum_{m=0}^{\infty}(m+\beta+1)^{-1-p\varepsilon}<\infty.$$

To prove Proposition \ref{ii}, we need the following result. 
\begin{claim}\label{cla}
Let $\rho, \eta\in (0,1/2)$. For any $\varepsilon\in (0, \frac{1}{p}(\beta+1))$, we can find a natural number ${\bf{N}}_0={\bf{N}}_0(\rho, \eta)$ such that 
\begin{equation}
[w_2(n)]^{\frac{1}{p}}A_n\geq (1-\rho)^2[w_1(n)]^{\frac{1}{p}}a_n\int_{\eta}^{1-\eta}(1-t)^{-(1-\frac{1}{p})(\beta+1)+\varepsilon}t^{-\frac{1}{p}(\beta+1)-\varepsilon}d\mu(t),\nonumber \end{equation} 
for all $n\geq {\bf{N}_0}.$  
\end{claim}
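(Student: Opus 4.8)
The plan is to establish a pointwise lower bound for $E_n(t)$ on the interval $[\eta,1-\eta]$ — essentially by running the lemmas of Section 2 so as to bound $E_n(t)$ \emph{from below} — and then to integrate against $\mu$. Since the $a_m$ in \eqref{am} are nonnegative, $A_n=\int_{(0,1)}E_n(t)\,d\mu(t)\geq\int_\eta^{1-\eta}E_n(t)\,d\mu(t)$, where $E_n(t)=(1-t)^n\sum_{m\geq0}k_{\alpha,\beta}(m,n)\,t^m(m+1)_\alpha\,c_m$ with $c_m:=(m+1)_\beta^{-1/p}(m+\beta+1)^{-(1/p+\varepsilon)}$. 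It is also convenient to record the identities $[w_1(n)]^{1/p}a_n=(n+\beta+1)^{-(1/p+\varepsilon)}$ and $[w_2(n)]^{1/p}=(n+\beta-\alpha+1)_\alpha^{-1}(n+1)_\beta^{1/p}$, so that the claim becomes a lower bound for $(n+\beta-\alpha+1)_\alpha^{-1}(n+1)_\beta^{1/p}A_n$.

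For the pointwise bound I would first invoke Lemma \ref{l-3} — the case $s=\beta\geq0$ directly, and for $s=\beta\in(-1,0)$ together with the trivial bound $(m+\beta+1)^{-\varepsilon}\geq(m+1)^{-\varepsilon}$ — to obtain $c_m\geq(m+\sigma)^{-((\beta+1)/p+\varepsilon)}$ for all $m\geq0$, where $\sigma:=\max\{1,\beta+1\}$ depends only on $\beta$. Writing $(m+\sigma)^{-((\beta+1)/p+\varepsilon)}$ as a $\Gamma$-integral via Lemma \ref{l-4}, interchanging sum and integral (all terms being nonnegative), summing the $m$-series by Lemma \ref{l-2} at the argument $te^{-x}\in[0,1)$, applying Lemma \ref{l-5} in the form $(1-te^{-x})^{-(n+\beta+1)}\geq(1-t)^{-(n+\beta+1)}e^{-t(1-t)^{-1}(n+\beta+1)x}$, and integrating once more by Lemma \ref{l-4}, one arrives at
\[
E_n(t)\ \geq\ (n+\beta-\alpha+1)_\alpha\,(1-t)^{-\beta-1}\,\kappa_n(t)^{-((\beta+1)/p+\varepsilon)},\qquad \kappa_n(t):=\sigma+\frac{t(n+\beta+1)}{1-t}.
\]

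Then, for $t\in[\eta,1-\eta]$, writing $\kappa_n(t)=\frac{t(n+\beta+1)}{1-t}(1+\theta)$ with $0\leq\theta\leq\sigma/(\eta(n+\beta+1))$ and using that $0<\frac{\beta+1}{p}+\varepsilon<\frac{2(\beta+1)}{p}$, the factor $(1+\theta)^{-((\beta+1)/p+\varepsilon)}$ is at least $\bigl(1+\sigma/(\eta(n+\beta+1))\bigr)^{-2(\beta+1)/p}$, which tends to $1$; hence it exceeds $1-\rho$ once $n\geq\mathbf{N}_1(\rho,\eta)$. Substituting and simplifying the powers of $t$ and $1-t$ gives $E_n(t)\geq(1-\rho)(n+\beta-\alpha+1)_\alpha(n+\beta+1)^{-((\beta+1)/p+\varepsilon)}(1-t)^{-(1-1/p)(\beta+1)+\varepsilon}t^{-(1/p)(\beta+1)-\varepsilon}$. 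Integrating over $[\eta,1-\eta]$, multiplying by $[w_2(n)]^{1/p}$, and using Stirling's formula in the form $(n+1)_\beta(n+\beta+1)^{-\beta}\to1$ to replace $(n+1)_\beta^{1/p}(n+\beta+1)^{-((\beta+1)/p+\varepsilon)}$ by $(1-\rho)(n+\beta+1)^{-(1/p+\varepsilon)}=(1-\rho)[w_1(n)]^{1/p}a_n$ for $n\geq\mathbf{N}_4(\rho)$, one obtains the claim with $\mathbf{N}_0:=\max\{\mathbf{N}_1,\mathbf{N}_4\}$. The one point requiring care — and the main obstacle — is that $\mathbf{N}_0$ must not depend on $\varepsilon$ (nor on $t\in[\eta,1-\eta]$); this holds precisely because the exponent $\frac{\beta+1}{p}+\varepsilon$ ranges over the fixed compact interval $(\frac{\beta+1}{p},\frac{2(\beta+1)}{p})$, so both the estimate on $(1+\theta)^{-((\beta+1)/p+\varepsilon)}$ above and the final Stirling comparison are uniform in $\varepsilon$.
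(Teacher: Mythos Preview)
Your argument is correct and follows essentially the same route as the paper's proof: Lemma~\ref{l-3} to bound $c_m$ below, Lemma~\ref{l-4} to pass to an integral, Lemma~\ref{l-2} to sum the kernel, Lemma~\ref{l-5} for the exponential comparison, and Stirling for the final $(1-\rho)$ factor. The only differences are cosmetic---you unify the cases $\beta\in(-1,0)$ and $\beta\geq0$ via the single constant $\sigma=\max\{1,\beta+1\}$ (the paper treats them separately, obtaining $F_n$ and $G_n$ which in fact coincide with your $\kappa_n$ in each case), and you work directly on $[\eta,1-\eta]$ rather than introducing the auxiliary parameter $\tau(\rho)$ and then forcing the threshold below $\eta$; both presentations yield the same $\mathbf{N}_0(\rho,\eta)$ independent of $\varepsilon$.
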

\begin{proof}[Proof of Claim \ref{cla}]
We first fix two positive numbers $b$ and $\tau$. We let $$b:=\frac{1}{p}(\beta+1)+\varepsilon,$$
and 
$$\tau:=\tau(\rho)=(1-\rho)^{-\frac{p}{2(\beta+1)}}-1.$$
%Then we see that $$(1+x)^{-\frac{2}{p}(\beta+1)}\geq 1-\rho,$$ for all $x\in (0, \tau]$.  

By Lemma \ref{l-3}, we have
\begin{equation}\label{eq-1}
  a_m \geq (m+1)_{\alpha} (m+1)^{-\frac{1}{p}(\beta+1)}{(m+\beta+1)^{-\varepsilon}}\geq (m+1)_{\alpha} (m+1)^{-\frac{1}{p}(\beta+1)-\varepsilon}, \nonumber
\end{equation}
for $\beta\in (-1,0)$, and 
\begin{equation} \label{eq-2}
  a_m \geq (m+1)_{\alpha} (m+\beta+1)^{-\frac{1}{p}(\beta+1)-\varepsilon},\nonumber
\end{equation}
for $\beta\geq 0$. 
By Lemma \ref{l-4}, we obtain that 
$$a_m\geq \frac{(m+1)_{\alpha}}{\Gamma(b)}\int_{0}^{\infty}e^{-(m+1)x}x^{b-1}dx,$$
for $\beta\in (-1,0)$, and 
$$a_m\geq \frac{(m+1)_{\alpha}}{\Gamma(b)}\int_{0}^{\infty}e^{-(m+\beta+1)x}x^{b-1}dx,$$
for $\beta\geq 0$. 

Thus, for $\beta\in (-1,0)$,  we have
\begin{eqnarray}
E_n(t)&=&\sum_{m=0}^{\infty}k_{\alpha, \beta}(m,n)t^m (1-t)^n |a_m| \nonumber \\
&\geq & \frac{(1-t)^n}{\Gamma(b)}\int_{0}^{\infty} e^{-x} x^{b-1} \sum_{m=0}^{\infty}k_{\alpha, \beta}(m,n)t^m e^{-mx}(m+1)_{\alpha}dx.  \nonumber
\end{eqnarray}
It follows from Lemma \ref{l-2} that
\begin{eqnarray}
E_n(t)&\geq &(n+\beta-\alpha+1)_{\alpha}\frac{(1-t)^n}{\Gamma(b)}\int_{0}^{\infty} e^{-x} x^{b-1} (1-te^{-x})^{-n-\beta-1}dx \nonumber \\
&=&(n+\beta-\alpha+1)_{\alpha} \frac{(1-t)^{-\beta-1}}{\Gamma(b)}\int_{0}^{\infty} e^{-x} x^{b-1} \Big(\frac{1-t}{1-te^{-x}}\Big)^{n+\beta+1}dx. \nonumber 
\end{eqnarray}
Then, from Lemma \ref{l-5} and again Lemma \ref{l-4}, we obtain that
\begin{eqnarray}\label{e-1}
E_n(t)&\geq & (n+\beta-\alpha+1)_{\alpha}\frac{(1-t)^{-\beta-1}}{\Gamma(b)}\int_{0}^{\infty} e^{-x} x^{b-1} e^{-(n+\beta+1)t x {(1-t)}^{-1}}dx  \nonumber \\
&=&(n+\beta-\alpha+1)_{\alpha}(1-t)^{-\beta-1}\Big[\frac{1+(n+\beta)t}{1-t}\Big]^{-b}:=F_{n}(t),
\end{eqnarray}
for $\beta\in (-1,0)$. For $\beta\geq 0$, in the same way, we can obtain that
\begin{eqnarray}\label{e-2}
E_n(t)\geq (n+\beta-\alpha+1)_{\alpha}(1-t)^{-\beta-1}\Big[\frac{\beta+1+nt}{1-t}\Big]^{-b}:=G_{n}(t).
\end{eqnarray}

Hereafter we will assume $n\geq 1$. Then, in order to estimate $F_n(t)$, note that when $t\geq \frac{1}{\tau(n+\beta)}$, it holds that 
\begin{equation}
\frac{1+(n+\beta)t}{t(n+\beta+1)}=1+\frac{1-t}{t(n+\beta+1)}\leq 1+\frac{1}{t(n+\beta)}\leq 1+{\tau}.\nonumber 
\end{equation}
Then it follows from $\varepsilon\in (0, \frac{1}{p}(\beta+1))$ that 
\begin{equation}\label{m-1}\Big[\frac{1+(n+\beta)t}{t(n+\beta+1)}\Big]^{-b}\geq (1+{\tau})^{-\frac{2}{p}(\beta+1)}=1-\rho.\end{equation}
Also, by Stirling's formula, we have  
\begin{equation} (n+1)_{\beta}^{\frac{1}{p}}(n+\beta+1)^{-\frac{\beta}{p}}\rightarrow 1,\,\,\, {\text{as}}\,\,\, n\rightarrow \infty. \nonumber 
\end{equation}

Hence we can take a constant $\bf{N}_1=\bf{N}_1(\rho)\in \mathbb{N}$ such that  
\begin{equation}\label{m-2}
 (n+1)_{\beta}^{\frac{1}{p}}(n+\beta+1)^{-\frac{\beta}{p}}\geq 1-\rho,
\end{equation}
when $n\geq {\bf{N}}_1$. 
Meanwhile, we can write
%\begin{eqnarray}\label{m-3}
%[w_2(n)]^{\frac{1}{p}}F_n(t)=t^{-b}a_n[w_1(n)]^{\frac{1}{p}\nonumber \\
 %   = \times \Big[\frac{1+(n+\beta)t}{t(n+\beta+1)}\Big]^{-b} %(n+1)_{\beta}^{\frac{1}{p}}(n+\beta+1)^{-\frac{\beta}{p}}.
%\end{eqnarray}
\begin{eqnarray}\label{m-3}
[w_2(n)]^{\frac{1}{p}}F_n(t)&=&(1-t)^{-(\beta+1)+b}t^{-b}a_n[w_1(n)]^{\frac{1}{p}}\nonumber \\
&& \times \Big[\frac{1+(n+\beta)t}{t(n+\beta+1)}\Big]^{-b} (n+1)_{\beta}^{\frac{1}{p}}(n+\beta+1)^{-\frac{\beta}{p}}.  
\end{eqnarray}
Combining (\ref{m-1}), (\ref{m-2}) and (\ref{m-3}), we obtain that 
\begin{eqnarray}\label{e-3} 
[w_2(n)]^{\frac{1}{p}}F_n(t)\geq (1-\rho)^2(1-t)^{-(\beta+1)+b}t^{-b}a_n[w_1(n)]^{\frac{1}{p}},
\end{eqnarray}
when $n\geq {\bf{N}}_1$ and $t\geq \frac{1}{\tau(n+\beta)}$.

To estimate $G_n(t)$, note that when $t\geq \frac{(\beta+1)}{\tau(n+\beta)}$, we have  
\begin{equation}
\frac{\beta+1+nt}{t(n+\beta+1)}=1+\frac{(1-t)(\beta+1)}{t(n+\beta+1)}\leq 1+\frac{\beta+1}{t(n+\beta)}\leq 1+{\tau}, \nonumber 
\end{equation}
so that 
\begin{equation}\label{n-2} \Big[\frac{\beta+1+nt}{t(n+\beta+1)}\Big]^{-b}\geq (1+{\tau})^{-\frac{2}{p}(\beta+1)}=1-\rho.\end{equation}

We write
\begin{eqnarray}\label{n-3}
[w_2(n)]^{\frac{1}{p}}G_n(t)&=&(1-t)^{-(\beta+1)+b}t^{-b}a_n[w_1(n)]^{\frac{1}{p}}\nonumber \\
&& \times \Big[\frac{\beta+1+nt}{t(n+\beta+1)}\Big]^{-b} (n+1)_{\beta}^{\frac{1}{p}}(n+\beta+1)^{-\frac{\beta}{p}}.
\end{eqnarray}
It follows from (\ref{m-2}), (\ref{n-2}) and (\ref{n-3}) that  
\begin{eqnarray}\label{e-4}
[w_2(n)]^{\frac{1}{p}}G_n(t)\geq (1-\rho)^2(1-t)^{-(\beta+1)+b}t^{-b}a_n[w_1(n)]^{\frac{1}{p}},
\end{eqnarray}
when $n\geq {\bf{N}}_1$ and $t\geq \frac{(\beta+1)}{\tau(n+\beta)}$.

Now, for $\beta\in (-1,0)$, note that when
$$n\geq {\bf{N}}_2={\bf{N}_2}(\rho, \eta):=\lceil{(\tau\eta)}^{-1}\rceil+2,$$ 
it holds that $\frac{1}{\tau(n+\beta)}\leq \eta.$ Here, $\lceil x \rceil$ is the ceiling function.
Hence, we obtain from (\ref{e-1}) and (\ref{e-3}) that
\begin{eqnarray}
[w_2(n)]^{\frac{1}{p}}A_n&=&\int_{(0,1)}[w_2(n)]^{\frac{1}{p}}E_n(t)d\mu(t) \nonumber \\
&\geq& (1-\rho)^2a_n[w_1(n)]^{\frac{1}{p}} \int_{\frac{1}{\tau(n+\beta)}}^{1-\frac{1}{\tau(n+\beta)}}(1-t)^{-(\beta+1)+b}t^{-b}d\mu(t) \nonumber \\
&\geq & (1-\rho)^2 a_n[w_1(n)]^{\frac{1}{p}} \int_{\eta}^{1-\eta}(1-t)^{-(\beta+1)+b}t^{-b}d\mu(t),\nonumber
\end{eqnarray}
when $n\geq\max\{\bf{N}_1, \bf{N}_2\}.$ 

Similarly, for $\beta\geq 0$, when 
$$n\geq {\bf{N}}_3={\bf{N}}_3({\rho, \eta}):=\lceil(\beta+1)(\tau\eta)^{-1}\rceil+1,$$ it holds that
$\frac{\beta+1}{\tau(n+\beta)}\leq\eta,$ so that, from (\ref{e-2}) and (\ref{e-4}), we have 
\begin{eqnarray}
[w_2(n)]^{\frac{1}{p}}A_n&=&\int_{(0,1)}[w_2(n)]^{\frac{1}{p}}E_n(t)d\mu(t) \nonumber \\
&\geq& (1-\rho)^2 a_n[w_1(n)]^{\frac{1}{p}} \int_{\frac{\beta+1}{\tau(n+\beta)}}^{1-\frac{\beta+1}{\tau(n+\beta)}}(1-t)^{-(\beta+1)+b}t^{-b}d\mu(t) \nonumber \\
&\geq & (1-\rho)^2 a_n[w_1(n)]^{\frac{1}{p}} \int_{\eta}^{1-\eta}(1-t)^{-(\beta+1)+b}t^{-b}d\mu(t), \nonumber
\end{eqnarray}
when $n\geq \max\{\bf{N}_1, \bf{N}_3\}.$ 

Then the claim follows if we take ${\bf{N}_0}={\bf{N}_0}({\rho, \eta})=\max\{\mathbf{N}_1, {\mathbf{N}}_2, {\mathbf{N}}_3\}$. The proof of the claim is finished. 
\end{proof}

We proceed to the proof of Proposition \ref{ii}. We still let $\varepsilon\in (0, \frac{1}{p}(\beta+1))$ and take the same $a=\{a_m\}_{m=0}^{\infty}$ as in (\ref{am}). Let $\rho, \delta \in (0,1/2)$, by using Claim \ref{cla}, for any $\varepsilon\in (0, \frac{1}{p}(\beta+1))$, we can take a natural number $\widehat{\bf{N}}_0=\widehat{\bf{N}}_0(\rho, \delta)$ such that 
\begin{equation}\label{z-1}
[w_2(n)]^{\frac{1}{p}}A_n\geq (1-\rho)^2[w_1(n)]^{\frac{1}{p}}a_n\int_{\delta}^{1-\delta}(1-t)^{-(1-\frac{1}{p})(\beta+1)+\varepsilon}t^{-\frac{1}{p}(\beta+1)-\varepsilon}d\mu(t),\end{equation} 
for $n\geq \widehat{\bf{N}}_0.$ By using Lemma \ref{l-6} with respect to $\rho, \widehat{\bf{N}}_0$, we see that there is a constant $\varepsilon_1=\varepsilon_1(\rho, \widehat{\bf{N}}_0(\rho, \delta))\in (0, \frac{1}{p}(\beta+1))$ such that, for $\varepsilon \in (0, \varepsilon_1],$
\begin{equation}\label{z-2}\sum_{n=\widehat{\mathbf{N}}_0+1}^{\infty}w_1(n)a_n^p \geq (1-\rho)\sum_{n=0}^{\infty}w_1(n)a_n^p.\end{equation}
It follows from (\ref{z-1}) and (\ref{z-2}) that, for $\varepsilon \in (0, \varepsilon_1],$
\begin{eqnarray}
\lefteqn{\|H_{\mu}^{\alpha,\beta}(a)\|_{p,w_2}=\Big[\sum_{n=0}^{\infty}w_2(n)A_n^p\Big]^{\frac{1}{p}}\geq \Big[\sum_{n=\widehat{\bf{N}}_0+1}^{\infty}w_2(n)A_n^p\Big]^{\frac{1}{p}}}\nonumber \\
&&\geq  (1-\rho)^{2}\int_{\delta}^{1-\delta}(1-t)^{-(1-\frac{1}{p})(\beta+1)+\varepsilon}t^{-\frac{1}{p}(\beta+1)-\varepsilon}d\mu(t)\Big[\sum_{n=\widehat{\mathbf{N}}_0+1}^{\infty}w_1(n)a_n^p\Big]^{\frac{1}{p}} \nonumber \\
&& \geq (1-\rho)^{2+\frac{1}{p}}\int_{\delta}^{1-\delta}(1-t)^{-(1-\frac{1}{p})(\beta+1)+\varepsilon}t^{-\frac{1}{p}(\beta+1)-\varepsilon}d\mu(t)\|a\|_{p,w_1}. \nonumber 
\end{eqnarray}
Then, for $\varepsilon \in (0, \varepsilon_1],$ \begin{equation}
\|H_{\mu}^{\alpha,\beta}\|\geq (1-\rho)^{2+\frac{1}{p}}\int_{\delta}^{1-\delta}(1-t)^{-(1-\frac{1}{p})(\beta+1)+\varepsilon}t^{-\frac{1}{p}(\beta+1)-\varepsilon}d\mu(t).\nonumber
\end{equation} 

Note that on the interval $[\delta, 1-\delta]$, the functions $$f_{\varepsilon}(t)=(1-t)^{-(1-\frac{1}{p})(\beta+1)+\varepsilon}t^{-\frac{1}{p}(\beta+1)-\varepsilon}, \varepsilon\in (0,\varepsilon_1],$$ are uniformly bounded above and $\mu$ is finite on $[\delta, 1-\delta]$. Then, let $\varepsilon \rightarrow 0^{+}$, by the dominated convergence theorem, we see that
\begin{equation}
\|H_{\mu}^{\alpha,\beta}\|\geq (1-\rho)^{2+\frac{1}{p}}\int_{\delta}^{1-\delta}(1-t)^{-(1-\frac{1}{p})(\beta+1)}t^{-\frac{1}{p}(\beta+1)}d\mu(t).\nonumber
\end{equation} 
Consequently, let $\delta\rightarrow 0^{+}$, we obtain from the monotone convergence theorem that  \begin{equation}
\|H_{\mu}^{\alpha,\beta}\|\geq (1-\rho)^{2+\frac{1}{p}}\int_{0}^{1}(1-t)^{-(1-\frac{1}{p})(\beta+1)}t^{-\frac{1}{p}(\beta+1)}d\mu(t).\nonumber
\end{equation}  Finally, let $\rho \rightarrow 0^{+}$, the boundedness of $\mathcal{C}_{\mu}(\beta, p)$ follows and $\|H_{\mu}^{\alpha,\beta}\|\geq \mathcal{C}_{\mu}(\beta, p)$. This proves Proposition \ref{ii}. 

Now, the proof of Theorem \ref{main} is done.

\section{{\bf Proof of Theorem \ref{main-2}}}
If $\mathcal{C}_{\mu}(\beta, \infty)<\infty$, for $a=\{a_m\}_{m=0}^{\infty}\in l_{\overline{w}_1}^{\infty}$, we have 
\begin{eqnarray}
\lefteqn{\|H_{\mu}^{\alpha, \beta}(a)\|_{\infty, \overline{w}_2}\leq \sup_{n\in \mathbb{N}_0}[\overline{w}_2(n)]A_n} \nonumber \\
&&= \sup_{n\in \mathbb{N}_0}[\overline{w}_2(n)] \int_{(0,1)}\sum_{m=0}^{\infty}k_{\alpha, \beta}(m,n)t^m (1-t)^n |a_m|d\mu(t)\nonumber \\
&&=  \sup_{n\in \mathbb{N}_0}[\overline{w}_2(n)] \int_{(0,1)}\sum_{m=0}^{\infty}k_{\alpha, \beta}(m,n)t^m (1-t)^n(m+1)_{\alpha}\cdot|a_m|[\overline{w}_1(m)]d\mu(t).\nonumber
\end{eqnarray}
Here $A_n$ is the same as in the proof of Theorem \ref{main}. Then, it follows from Lemma \ref{l-2} that
\begin{eqnarray}
\|H_{\mu}^{\alpha, \beta}(a)\|_{\infty, \overline{w}_2} \leq\int_{(0,1)}(1-t)^{-\beta-1}d\mu(t) \|a\|_{\infty, \overline{w}_1}=\mathcal{C}_{\mu}(\beta, \infty) \|a\|_{\infty, \overline{w}_1}.\nonumber 
\end{eqnarray}
This proves that $H_{\mu}^{\alpha, \beta}$ is bounded from $l_{\overline{w}_1}^{\infty}$ into $l_{\overline{w}_2}^{\infty}$, and $\|H_{\mu}^{\alpha, \beta}\|\leq \mathcal{C}_{\mu}(\beta, \infty).$

To prove $\|H_{\mu}^{\alpha, \beta}\|=\mathcal{C}_{\mu}(\beta, \infty).$ We take $a=\{a_m\}_{m=0}^{\infty}$ as
\begin{equation}\label{la-1}a_m=(m+1)_{\alpha},\, m\in \mathbb{N}_0.\end{equation}
Then $\|a\|_{\infty, w_1}=1$, and by Lemma (\ref{l-2}) again, we have
\begin{eqnarray}\label{la-2}
\|H_{\mu}^{\alpha, \beta}(a)\|_{\infty, \overline{w}_2}=\int_{(0,1)}(1-t)^{-\beta-1}d\mu(t) \|a\|_{\infty, \overline{w}_1}=\mathcal{C}_{\mu}(\beta, \infty) \|a\|_{\infty, \overline{w}_1}. 
\end{eqnarray}
This implies that $\|H_{\mu}^{\alpha, \beta}\|=\mathcal{C}_{\mu}(\beta, \infty).$ Also, if $H_{\mu}^{\alpha, \beta}: l_{\overline{w}_1}^{\infty}\rightarrow l_{\overline{w}_2}^{\infty}$ is bounded, take the sequence $a$
as in (\ref{la-1}), and by (\ref{la-2}), we can obtain that $\mathcal{C}_{\mu}(\beta, \infty)<\infty$. This completes the proof of Theorem \ref{main-2}.

\section{{\bf Final remarks}}
In this section, we consider the generalized Hilbert matrix operator acting on the spaces of analytic functions in the unit disk $\mathbb{D}$.   
We denote by $\mathcal{H}(\mathbb{D})$ the class of all analytic functions on $\mathbb{D}$. 
Let $\beta>-1$, $\mu$ be a positive finite Borel measure on $(0,1)$.  For $f=\sum_{m=0}^{\infty}a_mz^m \in  \mathcal{H}(\mathbb{D})$, we formally define 
$$H_{\mu}^{\beta}(f)(z):=\sum_{n=0}^{\infty}\Big[\sum\limits_{m=0}^{\infty}\int_{(0,1)}\binom{n+m+\beta}{m}t^m (1-t)^{n}a_md\mu(t)\Big]z^n,\, z\in \mathbb{D}.$$
When $\beta=0$, the operator $H_{\mu}^{0}$ has been studied in \cite{bcds} and \cite{bd}, where the authors characterize the measure $\mu$ such that the operator $H_{\mu}^{0}$ is bounded on Hardy spaces and Bergman spaces. We will consider the operator $H_{\mu}^{\beta}$ acting on the Dirichlet-type space $\mathcal{D}_{\lambda}$. 
 
For $\lambda\in \mathbb{R}$, the Dirichlet-type space $\mathcal{D}_{\lambda}$ is defined as
\begin{equation*}
\mathcal{D}_{\lambda}=\{f\in \mathcal{H}(\mathbb{D}): \|f\|_{\mathcal{D}_{\lambda}}:=(\sum_{n=0}^{\infty}(n+1)^{1-\lambda} |a_{n}|^2)^{\frac{1}{2}}<\infty\}.\end{equation*}
When $\lambda=0$,  $\mathcal{D}_0$ coincides the classic Dirichlet space $\mathcal{D}$, and when $\lambda=1$, $\mathcal{D}_1$ becomes the Hardy space $H^2$. More generalized Dirichlet-type spaces can be found in \cite{osca}.
 
Note that, for $n\geq 0, s>-1$, there exist two positive constants $C_1, C_2$, which are independent of the number $n$, such that $C_1 (n+1)^s \leq (n+1)_{s}\leq C_2 (n+1)^s$. Then, we see from (1) of Corollary \ref{cor} that
 \begin{theorem}

Let $\beta>-1$ and let $\mu$ be a positive finite Borel measure on $(0,1)$. Let $H_{\mu}^{\beta}$ be as above. Then $H_{\mu}^{\beta}$ is bounded on $\mathcal{D}_{1-\beta}$ if and only if 
$$\mathcal{C}_{\beta}(\mu):=\int_{(0,1)} (1-t)^{-\frac{1}{2}(\beta+1)} t^{-\frac{1}{2}(\beta+1)}d\mu(t)<\infty.$$
Moreover, if $\beta=1$, and 
$$\mathcal{C}_1(\mu)=\int_{(0,1)} (1-t)^{-1} t^{-1}d\mu(t)<\infty,$$ 
then $H_{\mu}^{1}$ is bounded on $\mathcal{D}$ and the norm of $H_{\mu}^{1}$ is $\mathcal{C}_1(\mu),$ and if $\beta=0$,
$$\mathcal{C}_0(\mu)=\int_{(0,1)} (1-t)^{-\frac{1}{2}} t^{-\frac{1}{2}}d\mu(t)<\infty,$$ 
then $H_{\mu}^{0}$ is bounded on $H^2$ and the norm of $H_{\mu}^{0}$ is $\mathcal{C}_0(\mu).$ 
\end{theorem}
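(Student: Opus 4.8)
The plan is to deduce the whole statement from part~(1) of Corollary~\ref{cor} with $p=2$ by passing to Taylor coefficients. The key remark is that, by the very definition of $\mathcal{D}_{1-\beta}$, a function $f=\sum_{m=0}^{\infty}a_m z^m\in\mathcal{H}(\mathbb{D})$ lies in $\mathcal{D}_{1-\beta}$ exactly when its coefficient sequence $a=\{a_m\}_{m=0}^{\infty}$ belongs to $l^2_v$ with $v(m)=(m+1)^{1-(1-\beta)}=(m+1)^{\beta}$, and then $\|f\|_{\mathcal{D}_{1-\beta}}=\|a\|_{2,v}$. Using the two–sided estimate $C_1(m+1)^{\beta}\le (m+1)_{\beta}\le C_2(m+1)^{\beta}$ recorded just before the theorem, $l^2_v$ coincides, with an equivalent norm, with $l^2_w$ for the weight $w(m)=(m+1)_{\beta}$, which is precisely the weight appearing in Corollary~\ref{cor}(1). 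Hence the coefficient map $\Phi:f\mapsto a$ is a Banach–space isomorphism from $\mathcal{D}_{1-\beta}$ onto $l^2_w$.

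Next I would observe that $\Phi$ intertwines the analytic operator $H_{\mu}^{\beta}$ introduced above with the sequence operator $H_{\mu}^{\beta}$ of Corollary~\ref{cor}: by the definition of $H_{\mu}^{\beta}(f)$, the $n$-th Taylor coefficient of $H_{\mu}^{\beta}(f)$ equals $\sum_{m=0}^{\infty}\int_{(0,1)}\binom{n+m+\beta}{m}t^m(1-t)^n a_m\,d\mu(t)$, so $\Phi\circ H_{\mu}^{\beta}=H_{\mu}^{\beta}\circ\Phi$. Consequently $H_{\mu}^{\beta}$ is bounded on $\mathcal{D}_{1-\beta}$ if and only if the sequence operator $H_{\mu}^{\beta}$ is bounded on $l^2_w$. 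By Corollary~\ref{cor}(1) the latter holds if and only if $\mathcal{C}_{\mu}(\beta,2)<\infty$, and $\mathcal{C}_{\mu}(\beta,2)=\mathcal{C}_{\beta}(\mu)$; moreover in that case $\{A_n\}:=H_{\mu}^{\beta}(a)\in l^2_w$, so $|A_n|\le C(n+1)^{-\beta/2}$ and hence $\sum_n A_n z^n$ converges in $\mathbb{D}$, which confirms that $H_{\mu}^{\beta}(f)$ is a well-defined analytic function. This proves the stated equivalence.

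For the two norm identities I would use that at the special values $\beta=1$ and $\beta=0$ the weight $(m+1)_{\beta}$ is not merely comparable to $(m+1)^{\beta}$ but equal to it: $(m+1)_1=\Gamma(m+2)/\Gamma(m+1)=m+1$ and $(m+1)_0\equiv 1$. Therefore $\Phi$ is an \emph{isometry} of $\mathcal{D}=\mathcal{D}_0$ onto $l^2_w$ with $w(m)=(m+1)_1$ when $\beta=1$, and of $H^2=\mathcal{D}_1$ onto $l^2_w$ with $w(m)=(m+1)_0$ when $\beta=0$, so operator norms transfer exactly. Applying Corollary~\ref{cor}(1) with $p=2$ then gives $\|H_{\mu}^{1}\|_{\mathcal{D}}=\mathcal{C}_{\mu}(1,2)=\int_{(0,1)}(1-t)^{-1}t^{-1}\,d\mu(t)=\mathcal{C}_1(\mu)$ and, likewise, $\|H_{\mu}^{0}\|_{H^2}=\mathcal{C}_{\mu}(0,2)=\int_{(0,1)}(1-t)^{-1/2}t^{-1/2}\,d\mu(t)=\mathcal{C}_0(\mu)$. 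I do not anticipate any genuine difficulty: the argument is essentially a change of language from Corollary~\ref{cor}(1), and the only point needing a line of justification is the convergence in $\mathbb{D}$ of the series defining $H_{\mu}^{\beta}(f)$, which as noted follows from $\{A_n\}\in l^2_w$ together with $\beta>-1$.
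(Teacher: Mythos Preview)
Your proposal is correct and follows essentially the same route as the paper, which simply invokes the two-sided estimate $C_1(n+1)^s\le (n+1)_s\le C_2(n+1)^s$ and part~(1) of Corollary~\ref{cor} with $p=2$; you have merely spelled out the identification of $\mathcal{D}_{1-\beta}$ with $l^2_w$ and the intertwining relation more explicitly than the paper does. One small imprecision: from $\{A_n\}\in l^2_w$ you only get $(n+1)^{\beta}|A_n|^2\to 0$, not a uniform bound $|A_n|\le C(n+1)^{-\beta/2}$, but this polynomial decay (or growth, when $\beta<0$) is still amply sufficient for the power series $\sum_n A_n z^n$ to converge on~$\mathbb{D}$.
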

We end the paper with the following general problem:
\begin{problem}
Characterize the measure $\mu$ such that $H_{\mu}^{\beta}$ is bounded from one analytic function space $X$ to another one $Y$ in $\mathbb{D}.$
\end{problem}

\section{\bf {Acknowledgments}}
The author would like to thank the referees for careful reading of the paper and invaluable comments and suggestions.

%    Insert the bibliography data here.

\end{document}